\newtheorem{definition}{Definition}[section]
\newtheorem{lemma}[definition]{Lemma}
\newtheorem{proposition}[definition]{Proposition}
\newtheorem{theorem}[definition]{Theorem}
\newtheorem{remark}[definition]{Remark}
\newtheorem{corollary}[definition]{Corollary}
\newtheorem{example}[definition]{Example}
\newtheorem{strategy}[definition]{Strategy}
\DeclareMathOperator{\tb}{\bf tb_\mathbb{Q}}
\DeclareMathOperator{\rot}{\bf rot_\mathbb{Q}}
\DeclareMathOperator{\tw}{\bf tw}
\DeclareMathOperator{\Fr}{\bf Fr}
\DeclareMathOperator{\cf}{\bf cf}
\newcommand\keywords[1]{\textbf{Keywords}: #1}
\title{Legendrian Negative Torus Knots in Universally Tight Lens Spaces}
\author{Han Zhang, \\ Email: \href{1901110026@pku.edu.cn}{1901110026@pku.edu.cn};}
\affil{Department of Mathematics, Peking University, Beijing 100871, P. R. China}
\begin{document}

\maketitle
\begin{abstract}
The main theorem characterizes all Legendrian negative torus knots in universally tight lens space in the sense of coarse equivalence. Together with Onaran's results on Legendrian positive torus knots, all Legendrian torus knots in universally tight lens space is classified. The main method is splitting $L(p,q)$ by convex Heegaard decomposition.  
~\\
~\\
\noindent\keywords{Legendrian torus knots, Universally tight, Lens space}
\end{abstract}

\section{Introduction}
~~~~In a contact 3-manifold, a \textbf{\textit{Legendrian knot}} is a smooth knot which is tangent to contact plane everywhere. There are three classical invariants of a (rationally) null-homologous Legendrian knot: its oriented knot type, (rational) Thurston–Bennequin invariant (denoted by $\tb$), and (rational) rotation number (denoted by $\rot$).

The classification of Legendrian knot for an oriented knot type is very important. People wonder whether all Legendrian knots are determined by there classical invariants in $(S^3, \xi_{st})$. Unfortunately, the answer is NO. Chekanov listed two Legendrian knots in $(S^3, \xi_{st})$ in his paper \cite{C}. These two knots have the same classical invariants and yet they are not Legendrian isotopic. Therefore, it is an interesting problem that: which oriented knot type is good enough such that classical invariants determined Legendrian knots up to Legendrian isotopy in $(S^3, \xi_{st})$? More generally, what about Legendrian links? And what about Legendrian knots in arbitrary contact manifolds?

There has been some progress on this problem. Eliashberg and Fraser shown that Legendrian unknot is determined by its Thurston–Bennequin invariant and rotation number up to Legendrian isotopy in standard tight 3-sphere in \cite{EF}. Later, Etnyre and Honda classified all Legendrian torus knots and figure eight knot in tight 3-sphere in their paper \cite{EH}. Their paper \cite{EH} gives us two ways to consider Legendrian knot classification problem. One is splitting contact manifold by convex Heegaard decomposition; another is studying contact structures on knot complement. The first method is used in this paper since it is natural to split $L(p,q)$ by the Heegaard torus where the torus knot is embedded in.

On the other hand, the study on Legendrian links in $(S^3, \xi_{st})$ are fewer than on knots. The first classification results of Legendrian links in $(S^3, \xi_{st})$ is given by Ding and Geiges in \cite{DG1}. They shown that Legendrian links consisting of an unknot and a cable of that unknot, are classified by their oriented link type and the classical invariants. They also proved the analogous result for torus knots in 1-jet space $J^1(S^1)$ with its standard tight contact structure. In 2021, Dalton, Etnyre, and Traynor gave a classification of Legendrian torus links in \cite{DET}, also, they gave a classification of Legendrian and transversal cable links of knot types that are uniformly thick and Legendrian simple.

There is progress on lens spaces as follow. In \cite{DG2}, Ding and Geiges gave a explicit description of contact mapping class group of $(L(0,1),\xi_{st})$ (i.e.$S^1\times S^2$). Later, Chen, Ding, and Li classified all Legendrian torus knots in $(L(0,1),\xi_{st})$ up to Legendrian isotopy in paper \cite{CDL}. By section 2 of \cite{O}, Heegaard torus in lens space is unique up to smooth isotopy. This fact allow us to define torus knots in arbitrary lens space $L(p,q)$. Onaran gave oriented knot types in theorem 2.3 in \cite{O}. In section 4 of it, Onaran classified all Legendrian positive torus knots in the universally tight contact structures on the lens spaces up to contactomorphism. Recently, Min H. find out the contact mapping class group of universally tight Lens spaces in his paper \cite{M}. His work characterized the contactomorphisms mentioned above.

In this article, we always assume that the lens space is equipped with a universally tight contact structure $\xi_{ut}$ and give the classification of Legendrian negative torus knots in it. Here, ``classification" means the classification of Legendrian knots up to \textbf{\textit{coarse equivalence}}.
\begin{definition}
Two Legendrian knots $K$ and $K'$ are \textbf{coarsely equivalent} if and only if there exists a contactomorphism which is smoothly isotopic to the identity, mapping $K$ to $K'$.
\end{definition}The main idea is to mimic the approach of Etnyre and Honda in their paper \cite{EH}. 
\begin{strategy}\label{strategy}
    Given an oriented knot type $\mathcal{K}$ in lens space:
\begin{itemize}
    \item Find out the max rational Thurston–Bennequin invariant (denoted by $\overline{\tb}$) for knot type $\mathcal{K}$. Then compute all correspondent rational rotation numbers ($\rot$). Classify all Legendrian torus knot of knot type $\mathcal{K}$ with max rational Thurston–Bennequin invariant.
    \item Whenever a Legendrian torus knot $K$ do not reaches its max rational Thurston–Bennequin number, show $K$ destabilizes.
    \item if $K$, $K'$ are two knots realizing the same oriented knot type with max $\tb$. Then study the relationship between their stabilizations.
\end{itemize}
\end{strategy}

Following the steps above, we prove that 
\begin{theorem}[Main theorem]\label{thm1}
In a universally tight contact structure on a lens space $(L(p,q),\xi_{ut})$ with $p>q\geq1$, two Legendrian negative torus knots are coarsely equivalent if and only if their oriented knot types, rational Thurston–Bennequin invariants, and rational rotation numbers agree.
\end{theorem}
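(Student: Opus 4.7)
The plan is to carry out the three-step Strategy \ref{strategy} by means of convex surface theory on the Heegaard torus $T$ of $L(p,q)$ that contains the negative torus knot $K$. After a $C^{0}$-small Legendrian isotopy I would make $T$ convex with $K$ sitting on it as a Legendrian ruling. The universally tight hypothesis on $\xi_{ut}$, together with Honda's classification of tight contact structures on solid tori, controls the dividing set $\Gamma_T$ and the admissible extensions of $\xi_{ut}$ across the two solid tori $V_{1}, V_{2}$ of the splitting. This ``convex splitting picture'' provides the common framework for all three steps.

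For the first step, the maximal rational Thurston--Bennequin invariant $\overline{\tb}$ can be read off from the minimum geometric intersection of $K$ with $\Gamma_T$ over all convex representatives of $T$ compatible with $\xi_{ut}$, yielding an explicit value in terms of $p$, $q$ and the torus slope of $K$. The possible values of $\rot$ are then enumerated by tracking signed bypass counts in convex meridional disk decompositions of $V_{1}$ and $V_{2}$. To classify max-$\overline{\tb}$ representatives with the same $\rot$, I would show that once the boundary dividing data on $\partial(L(p,q)\setminus \nu(K))$ is fixed, the complement carries a unique tight contact structure: both sides become solid tori with forced dividing slopes, and Honda's uniqueness theorem produces a contactomorphism of pairs $(L(p,q),K)\to (L(p,q),K')$ smoothly isotopic to the identity.

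For the second step, if $\tb(K)<\overline{\tb}$, I would make $T$ convex with $K\subset T$ and observe that $\Gamma_T$ must have strictly more dividing components than the maximal configuration. The imbalance principle then forces a bypass disk in $V_{1}$ or $V_{2}$ whose arc of attachment meets $K$, and such a bypass realises a Legendrian destabilization of $K$. For the third step, I would prove that any two max-$\overline{\tb}$ representatives sharing the same oriented knot type become Legendrian isotopic after a single stabilization of the appropriate sign, via a bypass-attaching argument on a meridional disk of the handlebody that governs the relevant classical invariant; this upgrades the coarse classification from the maximal stratum down to every stabilized stratum.

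The principal obstacle I expect is the destabilization step. In the Etnyre--Honda treatment for $(S^{3},\xi_{st})$ the imbalance between the two sides of the convex splitting is handled by a clean dichotomy, but in a universally tight $L(p,q)$ the admissible dividing slopes on $V_{1}$ and $V_{2}$ depend sensitively on the continued-fraction expansion of $p/q$, and a bypass obtained on the ``wrong'' side may not descend to a destabilization in the ambient manifold. Ruling out this pathology for \emph{negative} torus knots -- the positive case, handled by Onaran, admits an easier dichotomy because the knot slope and the two boundary slopes sit in a more favourable Farey configuration -- is the essential new content, and I expect it to follow from a careful case analysis of bypass moves that both respect universal tightness and can be transported across $T$ to the side on which the oriented knot type is preserved.
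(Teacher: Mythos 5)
Your overall framework (convex Heegaard splitting, Honda's classification on the resulting solid tori and thickened tori, and the three-step Etnyre--Honda strategy) is the same as the paper's, but two of your key claims are false as stated and a third essential difficulty is left unresolved. First, the assertion that fixing the dividing data on $\partial(L(p,q)\setminus\nu(K))$ forces a \emph{unique} tight structure on the complement cannot be right in general: for $\frac{b}{a}\in(-1,0)$ (and the mirror range $(-\frac{p}{q},-\frac{p'}{q'})$) the paper shows there are $2m$ distinct rotation numbers at maximal $\tb$, where $m$ is determined by $-\frac{1}{m+1}\geq\frac{b}{a}>-\frac{1}{m}$; these correspond to genuinely different tight structures on the complement (distinguished by which positive or negative universally tight layer the torus $T$ sits in, i.e.\ by relative Euler classes of the layers), all with the same boundary dividing data. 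Uniqueness only holds after you additionally fix the rotation number, which is exactly the extra layering data the paper's layer-by-layer contactomorphism construction records. Second, your step three is quantitatively wrong: adjacent peaks of the mountain range do \emph{not} merge after a single stabilization. Writing $|a|=bm+e$ with $0<e<b$, the paper shows that peaks with rotation numbers $r$ and $r-2e$ satisfy $S_e^+(K')=S_e^-(K)$, and peaks differing by $2(b-e)$ merge only after $b-e$ stabilizations. With a ``depth one'' merging rule the classification on every stratum below the maximum would come out wrong, so this is not a cosmetic slip.

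Separately, you identify the destabilization step as the principal obstacle, but in the paper that step is the easy one: one runs a convex annulus from $K'$ on its torus $T'$ to a maximal-twisting representative on a torus $T$ of the optimal dividing slope, and the imbalance principle produces a boundary-parallel dividing arc along $K'$ directly, with no need to transport bypasses across $T$ from meridian disks of $V_1$ or $V_2$. The genuinely hard new content is elsewhere: for $\frac{b}{a}\in(-\infty,-\frac{p}{q})$ one must determine which slopes $-\frac{s}{t}\in(-\frac{p}{q},0)$ minimize $|as+bt|$, which the paper does via a Farey-tessellation ``hop sequence'' from $-\infty$ to $-\frac{p}{q}$ built from the continued fraction of $-\frac{p}{p-q}$; and when $\frac{b}{a}$ equals one of the hop-sequence slopes, the maximum twisting is realized on tori of several different dividing slopes, and one needs an explicit Legendrian isotopy (via a convex annulus in a basic slice) to see that all these representatives coincide. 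Your proposal does not engage with either of these points, and without them neither the value of $\overline{\tb}$ nor the single-peaked shape of the mountain range in that regime is established.
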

Together with Onaran's results (see theorem 4.4 of \cite{O}), all Legendrian torus knots in universally tight lens space is classified. Actually, by corollary 1.4 of \cite{M}, the ``coarsely equivalent" can be replaced by Legendrian isotopy, see \ref{Enhanced main thm}.  As a by-product of the proof of the main theorem, the mountain range of each oriented torus knot type can be easily obtained. 

Another important conception in contact topology is \textbf{\textit{transverse knot}}. A transverse knot is a smooth knot, transverse to the contact plane everywhere. There are two classical invariants of a (rationally) null-homologous transverse knot: its oriented knot type and (rational) self-linking number. As a corollary of \ref{thm1}, we have:
\begin{theorem}
In a universally tight contact structure on a lens space, two transverse negative torus knot are coarsely equivalent if and only if their oriented knot types and rational self-linking numbers are agree.
\end{theorem}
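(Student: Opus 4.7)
The plan is to reduce the transverse classification to Theorem \ref{thm1} via the standard Legendrian--transverse correspondence, adapted to the rationally null-homologous setting. Recall that every transverse knot in a contact 3-manifold is transversely isotopic to the transverse push-off $T_+(L)$ of some Legendrian knot $L$, and that two Legendrian knots have transversely isotopic push-offs if and only if they become Legendrian isotopic after some number of negative stabilizations. Moreover, for a rationally null-homologous Legendrian $L$, the rational self-linking number of $T_+(L)$ equals $\tb(L)-\rot(L)$; in particular, negative stabilization, which sends $(\tb,\rot)\mapsto(\tb-1,\rot-1)$, preserves this quantity.

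Given two transverse negative torus knots $T_1,T_2$ in $(L(p,q),\xi_{ut})$ with the same oriented knot type $\mathcal{K}$ and the same rational self-linking number $s$, I would choose Legendrian approximations $L_1,L_2$ of $T_1,T_2$, so that $\tb(L_i)-\rot(L_i)=s$. Without loss of generality assume $\tb(L_1)\geq\tb(L_2)$, and set $k=\tb(L_1)-\tb(L_2)\geq 0$. Negatively stabilizing $L_1$ a total of $k$ times yields a Legendrian $L_1'$ of type $\mathcal{K}$ with $\tb(L_1')=\tb(L_2)$ and $\rot(L_1')=\rot(L_1)-k=\rot(L_2)$. By Theorem \ref{thm1}, $L_1'$ and $L_2$ are coarsely equivalent, so there is a contactomorphism $\varphi$, smoothly isotopic to the identity, with $\varphi(L_1')=L_2$.

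To conclude, I would push $\varphi$ through the push-off construction. Since $T_+(L_1')$ is transversely isotopic to $T_+(L_1)=T_1$ (negative stabilization preserves the transverse push-off) and $T_+(L_2)=T_2$, and since transverse isotopies of knots extend to ambient contact isotopies smoothly isotopic to the identity, composing $\varphi$ with these extensions gives a contactomorphism smoothly isotopic to the identity carrying $T_1$ to $T_2$. The reverse direction, that coarsely equivalent transverse knots share oriented knot type and rational self-linking number, is immediate from the invariance of these classical invariants under contactomorphism.

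The main technical obstacle is to verify the Legendrian--transverse correspondence carefully in the rationally null-homologous setting: that every transverse knot admits a Legendrian approximation, that two Legendrian approximations of transversely isotopic transverse knots differ only by negative stabilizations, and that $\mathrm{sl}_\mathbb{Q}(T_+(L))=\tb(L)-\rot(L)$. These are standard for null-homologous knots in $(S^3,\xi_{st})$ by Epstein--Fuchs--Meyer and Etnyre--Honda, and the rational analogues should follow by the same local arguments near the knot, using a rational Seifert surface in place of an integral one when computing the self-linking invariant.
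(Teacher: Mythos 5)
Your proposal is correct and is exactly the argument the paper implicitly relies on: the paper states this result as an immediate corollary of Theorem \ref{thm1} without writing out a proof, and the intended justification is precisely the standard reduction via Legendrian approximation, the fact that transverse push-offs identify Legendrian knots up to negative stabilization, and the identity $\mathrm{sl}_{\mathbb{Q}}(T_+(L))=\tb(L)-\rot(L)$. The rational versions of these facts that you flag as the technical obstacle are indeed available in the literature (they follow from the same local computations near the knot, with a rational Seifert surface), so there is no gap.
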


In order to fully understand this paper, the reader should be familiar with section 3 and 4 of \cite{H}, section 4 of \cite{EH} and section 2,3,4 of \cite{O}. As a preparation, section 2 will not including everything used in this paper. After the preparing part, we prove the main theorem in each cases, see section 3,4,5,6. At the last section, we will discuss recent progress based on Min's work \cite{M}.

\section{Preparation}
\subsection{Lens space}
\begin{definition}
Let $p, q, (p>q>0)$ be two co-prime integers. Let $V_1$ and $V_2$ be two solid torus $V_i= D^2\times S^1 $ where $i=1,2$. The meridian and longitude of $V_i$ is denoted by $(\mu_i, \lambda_i)$. In the sense of Heegaard decomposition, a \textbf{lens space} $L(p,q)$ can be described by $V_1\cup_\phi V_2$. The gluing map $\phi:\partial V_2\rightarrow \partial V_1$ ~is an orientation-reversing diffeomorphism given in standard meridian-longitude coordinates on the torus by the matrix
$$
[\phi]=\begin{pmatrix}
-q&q'\\
p&-p'
\end{pmatrix}.$$ where the determinant of $[\phi]$ is $-1$ ,and $p', q'$ are integers. 
\end{definition}

In particular, $\phi(\mu_2)=-q\mu_1+p\lambda_1$. This fact concludes that $$\pi_1(L(p,q))=H_1(L(p,q))=\langle \lambda_1 |~p\lambda_1=1 \rangle.$$
\begin{remark} By section 4.6.1 of \cite{H}, there is a unique $(p', q')$ such that
$$\begin{cases}
p>p'>0,\\
q\geq q'>0,\\
pq'-p'q=1.
\end{cases}$$
In this paper, call such matrix $[\phi]$ the \textbf{standard matrix representation} of gluing map $\phi$. In the following article, we always assume $[\phi]$ has this form .
\end{remark}
\begin{proposition}[prop.4.17 of \cite{H}]
The tight contact structure on $L(p,q)$ is 1-1 determined by the characteristic thickened torus inside it. The \textbf{characteristic thickened torus} is bounded by two convex Heegaard torus $T_1$ and $T_2$ such that 
$$\begin{cases}
\#\Gamma_{T_1}=\#\Gamma_{T_2}=2,\\
\Gamma_{T_2} \text{~has homotopy type } \lambda_2=q'\mu_1-p'\lambda_1,\\
\Gamma_{T_1}  \text{~has homotopy type } \mu_1-\lambda_1,\\
\text{the tight contact structure of this thickened torus is min twist.}
\end{cases}$$
Here, $\#\Gamma_{T_i}$ is the number of components of $\Gamma_{T_i}$. The contact structure of it can be described by the relative Euler class.
\end{proposition}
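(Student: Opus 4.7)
The plan is, starting from an arbitrary tight contact structure $\xi$ on $L(p,q)$, to canonically extract a thickened torus whose boundary dividing slopes and relative Euler class together encode $\xi$. First I would realize the Heegaard torus $T=\partial V_1=\partial V_2$ as a convex surface, using Giroux's perturbation theorem. Since $T$ is an incompressible genus-one surface in a tight manifold, its dividing set $\Gamma_T$ consists of $2n$ parallel essential closed curves of some common slope, with no $\partial$-parallel arcs.

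Next I would combine Honda's classification of tight contact structures on solid tori with bypass and edge-rounding to produce the two prescribed convex tori. Each $V_i$ inherits a tight structure with convex boundary of slope determined by $\Gamma_T$, and by passing through ``standard neighborhoods of a Legendrian core'' one can find, inside $V_1$, a convex isotope of $\partial V_1$ whose dividing set realizes the slope $\mu_1-\lambda_1$; symmetrically, inside $V_2$ one produces a convex torus $T_2$ whose dividing set is sent by $\phi$ to $q'\mu_1 - p'\lambda_1$. The region between $T_1$ and $T_2$ then carries a tight, minimally twisting contact structure, because any additional $\pi$-twisting could be reabsorbed into one of the solid tori and would violate Honda's bound on continuous blocks of basic slices stemming from the continued-fraction expansion of $-p/q$.

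To finish the ``1--1'' part, I would show that the complements of this thickened torus are rigid: each is a standard tubular neighborhood of a Legendrian core, so any boundary-preserving contactomorphism to another standard neighborhood is isotopic to the identity rel boundary. With rigidity in place, Honda's enumeration of minimally twisting tight structures on $T^2 \times I$ via relative Euler class shows that this Euler class is a complete invariant: it recovers the structure on the thickened region, which in turn glues uniquely to the standard pieces to recover $\xi$.

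The main obstacle I anticipate is the uniqueness step: two different choices of convex tori of the prescribed slopes inside a given $\xi$ could a priori produce non-isotopic ``characteristic'' thickened tori. The proper tool is Honda's state traversal along the Farey tessellation, which certifies that $\mu_1-\lambda_1$ and $q'\mu_1-p'\lambda_1$ are the first slopes reached from each side along the minimal edge path; all competing candidate thickened tori are then related by bypass moves that are already accounted for in the continued-fraction enumeration, so the isotopy class of the characteristic thickened torus is well-defined.
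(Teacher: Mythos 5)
The paper does not prove this proposition at all: it is imported verbatim as Proposition 4.17 of \cite{H}, so there is no in-paper argument to compare against. Your outline is, in substance, a faithful reconstruction of Honda's actual proof --- perturb the Heegaard torus to be convex, factor off standard neighborhoods of Legendrian cores of $V_1$ and $V_2$ to produce the two boundary tori of the prescribed slopes, observe that the complementary solid tori carry a unique tight structure (so all the information lives in the middle layer), and invoke the relative Euler class classification of minimally twisting tight structures on $T^2\times I$ together with the Farey/continued-fraction bookkeeping for well-definedness. So the route is the right one.

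Two points deserve correction or sharpening. First, your justification of minimal twisting is not the mechanism Honda uses: the argument is not that excess twisting ``could be reabsorbed into one of the solid tori and would violate a count of basic slices'' (the count of tight structures is a \emph{consequence} of this analysis, not an input to it). Rather, if the layer between $T_1$ and $T_2$ twisted more than minimally, it would contain a convex torus parallel to the boundary of every slope, in particular one whose dividing slope equals the meridian $\mu_1$ of $V_1$ (or $\mu_2$ of $V_2$); a Legendrian divide of that torus bounds a meridian disc in the solid torus and has twisting number $0$ relative to the torus framing, producing an overtwisted disc and contradicting tightness. Second, in the uniqueness step you should make explicit that the relative Euler class is a complete invariant only in the minimally twisting, $\#\Gamma=2$ boundary situation (this is exactly Proposition \ref{prop.4.13} as quoted in this paper); for general tight structures on $T^2\times I$ the relative Euler class does not separate isotopy classes, so the reduction to the minimally twisting middle layer is what makes the ``1--1'' claim valid, and the well-definedness of the characteristic layer itself rests on the uniqueness of the tight solid tori of boundary slope $-1$ rather than on state traversal per se.
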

One can conclude from this proposition that the slope of convex Heegaard torus ranges in $(-\frac{p}{q},0)$.
\begin{proposition}[prop.5.1 of \cite{H}]
If $q\ne p-1$, there are exactly two universally tight contact structures on $L(p,q)$.
If $q=p-1$, there is exactly one.
By assuming the characteristic foliation of $T_1$ and $T_2$ to be standard, the relative Euler class of characteristic thickened torus is $\pm P.D.((-q',p')-(-1,1))$. Here, $P.D.$ means Poincare Duality.
\end{proposition}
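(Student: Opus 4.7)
The plan is to invoke Honda's classification of tight contact structures on thickened tori, applied to the characteristic thickened torus produced by the preceding proposition. Since that proposition asserts the tight contact structure on $L(p,q)$ is $1$-$1$ determined by the contact structure on this thickened torus, classifying universally tight structures on $L(p,q)$ reduces to classifying the universally tight min-twisting structures on $T^2\times I$ with prescribed boundary data: dividing slopes corresponding to $\mu_1-\lambda_1$ and $q'\mu_1-p'\lambda_1$, each with $\#\Gamma=2$.

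First I would set up coordinates so the boundary slopes are primitive integer vectors $v_1=(-1,1)$ and $v_2=(-q',p')$, and record a shortest edge-path from $v_1$ to $v_2$ in the Farey tessellation. By Honda's theorem, the min-twisting tight contact structures on $T^2\times I$ with these boundary slopes are parametrized by sign sequences on the edges of this path (equivalently, by a continued-fraction factorization of $-p/q$ into basic slices, each carrying a $\pm$). Next, I would identify the universally tight structures among these. A standard criterion of Honda and Giroux says that a min-twisting structure on $T^2\times I$ is universally tight exactly when its sign sequence is constant. When $q\neq p-1$, the Farey path has length at least $2$, and the all-$+$ and all-$-$ structures are not contactomorphic after gluing to form $L(p,q)$, so there are exactly two universally tight structures. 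When $q=p-1$ the path has length one — the thickened torus is itself a basic slice — and the involution of a basic slice swapping its two sign choices descends to a contactomorphism of $L(p,p-1)$, so the two candidates are identified and there is a unique universally tight structure.

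For the relative Euler class, I would use Honda's basic-slice formula: on a basic slice of sign $\varepsilon$ whose boundary dividing sets have primitive direction vectors $w_-$ (inner) and $w_+$ (outer), the relative Euler class is $\varepsilon\cdot P.D.(w_+-w_-)$. In the all-$\varepsilon$ universally tight factorization, these basic-slice contributions telescope across intermediate edges of the Farey path, yielding
\[
e(\xi)=\varepsilon\cdot P.D.\bigl((-q',p')-(-1,1)\bigr),
\]
which is the stated formula with the $\pm$ corresponding to the two universally tight structures (or the one, when $q=p-1$).

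The main obstacle I expect is bookkeeping with conventions: verifying that the primitive integer vectors $(-1,1)$ and $(-q',p')$ really do realize the dividing-curve homotopy types $\mu_1-\lambda_1$ and $q'\mu_1-p'\lambda_1$ in the $(\mu_1,\lambda_1)$-basis chosen in the previous proposition, that the orientation of $T^2\times I$ (inner-to-outer) is consistent with Honda's sign convention for basic slices, and that the telescoping sum across the Farey path does not drop a sign. Getting this right is what ensures the answer is exactly $\pm P.D.((-q',p')-(-1,1))$ rather than differing by an overall sign or a basis change.
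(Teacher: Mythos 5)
This proposition is quoted verbatim from Honda (Proposition 5.1 of \cite{H}); the paper supplies no proof of its own, so the only meaningful comparison is with Honda's argument, which your sketch broadly follows (Farey path between the two boundary slopes, universal tightness $\Leftrightarrow$ constant sign sequence, telescoping the basic-slice Euler classes to get $\pm P.D.((-q',p')-(-1,1))$). That part is fine.

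However, your case analysis contains a genuine error: the dichotomy is not ``path length $\geq 2$ versus length $1$'' but ``$(p',q')\neq(1,1)$ versus $(p',q')=(1,1)$.'' When $q=p-1$ the normalization $p>p'>0$, $q\geq q'>0$, $pq'-p'q=1$ forces $(p',q')=(1,1)$, so $\Gamma_{T_1}$ and $\Gamma_{T_2}$ have the \emph{same} slope $-1$: the characteristic thickened torus is $I$-invariant, not a basic slice, its relative Euler class is $P.D.\bigl((-1,1)-(-1,1)\bigr)=0$, and uniqueness is immediate. Conversely, when $q\neq p-1$ the Farey path can perfectly well have length exactly $1$ — e.g.\ $L(5,2)$ has $(p',q')=(2,1)$, so the characteristic thickened torus is a single basic slice from slope $-1$ to slope $-2$ — and yet there are \emph{two} universally tight structures there. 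So your proposed mechanism for the $q=p-1$ case (an involution of a basic slice swapping the two signs and descending to the lens space) would, if valid, also collapse the two structures on $L(5,2)$, contradicting the proposition itself. The correct way to see the whole dichotomy is through the Euler class you computed: the two candidates have relative Euler classes $\pm P.D.((-q',p')-(-1,1))$, which are distinct exactly when $(-q',p')\neq(-1,1)$, i.e.\ exactly when $q\neq p-1$; combined with the fact (the preceding proposition) that the tight structure on $L(p,q)$ is determined by the structure on the characteristic thickened torus, this both separates the two structures when $q\neq p-1$ and identifies them when $q=p-1$, with no involution argument needed.
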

\begin{remark}
In this paper, we choose $\xi_{ut}$ to be the one determined by relative Euler class $P.D.((-q',p')-(-1,1))$ of characteristic thickened torus. The characteristic thickened torus has the orientation induced from $L(p,q)$. The upper boundary $T_2$ of the characteristic thickened torus is oriented by $\partial V_1$ while the lower boundary $T_1$ is oriented by the negative orientation of $\partial V_1$.
\end{remark}
\begin{remark}
    For convenient, always assume the convex Heegaard tori is standard foliated (i.e. linear folatied) and described in coordinate $(\mu_1,\lambda_1)$ of $\partial V_1$.
\end{remark}
\subsection{torus knots in lens spaces}

\begin{definition}
An \textbf{oriented knot type} $\mathbf{K_{(a,b)}}$ is represented by a torus knot on a Heegaard torus in $L(p,q)$ with homotopy type $$[K_{(a,b)}]=a\mu_1+b\lambda_1\in H_1(\partial V_1).$$  
\end{definition}

Notice the universal cover $\pi:S^3\rightarrow L(p,q)$ preserves Heegaard decomposition, because $\pi^{-1}(\partial V_1)$ is the boundary of a tubular neighborhood of an unknot in $S^3$. Also, the standard meridian and longitude of this solid torus in $S^3$ is exactly the lift of $\mu_1-$curve and $\mu_2-$curve in $L(p,q)$. Let $r$ denote the order of $K_{(a,b)}$ in the fundamental group of $L(p,q)$. One can compute that:
$$
r[K_{(a,b)}]=(ra+\frac{rbq}{p})\mu_1+\frac{rb}{p}\mu_2\in H_1(\partial V_1).
$$(see lemma 2.5 of \cite{O}). This concludes that $\pi^{-1}(K_{(a,b)})$ is a torus link with $\frac{p}{r}$ components and each component is a $(ra+\frac{rbq}{p}, \frac{rb}{p})$ torus knot. We indicate
\begin{lemma}
Let $K$ be a Legendrian $(a,b)$ torus knot on a Heegaard torus $T$ in universally tight lens space $(L(p,q),\xi_{ut})$. Also, assume $K$ is not a rational unknot. Let $\tw(K, \Fr T)$ denote the twisting number of contact planes along $K$ in the framing given by $T$. We have $\tw(K, \Fr T)\leq0$. From lemma \ref{twnumber}, after a small perturbation fixing $K$, T is convex.
\end{lemma}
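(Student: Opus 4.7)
The statement comprises two claims: the non-positivity $\tw(K,\Fr T)\le 0$ and the existence of a small perturbation fixing $K$ rendering $T$ convex. The second claim is a direct invocation of the cited lemma \ref{twnumber}, so my plan focuses on proving the bound.

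My approach is to pass to the universal cover $\pi\colon S^3 \to L(p,q)$. Since $\xi_{ut}$ is universally tight, $\pi^*\xi_{ut}=\xi_{st}$ and $\pi$ is a local contactomorphism. The excerpt already records that $\pi^{-1}(T)$ is a Heegaard torus $\tilde T$ in $S^3$ and that $\pi^{-1}(K)$ is a disjoint union of $p/r$ Legendrian torus knots on $\tilde T$, each of smooth type $(ra+rbq/p,\,rb/p)$, where $r$ is the order of $[K]$ in $\pi_1(L(p,q))$. The assumption that $K$ is not a rational unknot will be used to guarantee that every lifted component $\tilde K_i$ is a non-trivial torus knot in $S^3$, i.e.\ neither coordinate degenerates to $0$ or $\pm 1$.

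Next, I would establish the covering relation for the twisting invariant. Because $\pi$ restricts to an $r$-fold covering on each $\tilde K_i$ and the rotation of contact planes relative to a surface framing is additive under concatenation, traversing $\tilde K_i$ once accumulates exactly $r$ times the rotation picked up by one traversal of $K$, so $\tw(\tilde K_i,\Fr \tilde T) = r\cdot \tw(K,\Fr T)$. Therefore, if $\tw(K,\Fr T)>0$, each $\tilde K_i$ would have strictly positive twisting along $\tilde T$ in $(S^3,\xi_{st})$, contradicting the classical bound of Etnyre--Honda that a Legendrian non-trivial torus knot on a Heegaard torus in $(S^3,\xi_{st})$ has non-positive twisting with respect to the torus framing. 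Hence $\tw(K,\Fr T)\le 0$, and lemma \ref{twnumber} then produces the promised convex perturbation fixing $K$.

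The main obstacle I foresee is the careful bookkeeping of the reduction: verifying the covering-degree identity for the twisting with the correct sign and orientation conventions (so that the factor is genuinely $+r$ and the inequality has the asserted direction), and confirming that the hypothesis ``$K$ is not a rational unknot'' precisely excludes the arithmetic configurations in which the lifted pair $(ra+rbq/p,\,rb/p)$ would represent an unknot in $S^3$. Once these two points are settled, the argument collapses to an application of the known $S^3$ torus-knot bound.
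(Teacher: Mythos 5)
Your proposal is correct and follows essentially the same route as the paper: lift to the universal cover $\pi\colon S^3\to L(p,q)$ using that $\pi$ is a local contactomorphism, relate the twisting of the lifted components to $\tw(K,\Fr T)$ by the covering degree, and invoke the Etnyre--Honda non-positivity bound for non-trivial Legendrian torus knots on a Heegaard torus in $(S^3,\xi_{st})$. The only cosmetic difference is that you work component-wise with the factor $r$, while the paper sums over the $p/r$ components to get the total factor $p$; the substance is identical.
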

\begin{proof}
Because $\pi$ is a local contactomorphism and a $p-$cover map, we have $$\tw(\pi^{-1}K, \Fr(\pi^{-1}T))=p~{\tw(K, \Fr T)}.$$
If $K$ is not a rational unknot, then, $\pi^{-1}K$ is not an unknot. Let $K_i$ denote each component of $\pi^{-1}K$ where $i=1,2,\dots,\frac{p}{r}$. From theorem 4.1 of \cite{EH} and proposition 2.22 of \cite{DET}, we derive $$\tw(\pi^{-1}K, \Fr(\pi^{-1}T))=\sum_{i=1}^{{p}/{r}}{\tw(K_i, \Fr(\pi^{-1}T))}=\frac{p}{r}{\tw(K_i, \Fr(\pi^{-1}T))}\leq0,~\forall i=1,2,\dots,\frac{p}{r}. $$ Therefore, $\tw(K, \Fr T)\leq0.$
\end{proof} 
\begin{remark}
    For a Legendrian rational unknot $K$ in $(L(p,q),\xi_{ut})$, it is not always true that $\tw(K,\Fr T)\leq0$. Specifically, assume $K$ is a Legendrian $(a,1)$ torus knot on Heegaard torus $T$. Then, $\forall k\in\mathbb{Z}$ there exist a Heegaard torus $T'$ containing $K$ s.t. $K$ is a $(a+k,1)$ torus knot on $T'$. We have $\tw(K, \Fr T')=\tw(K, \Fr T)-k$. Hence, a convex Heegaard torus $T$ containing $K$ is always exist.
\end{remark}

\begin{lemma}[prop.3.2 of \cite{O}]\label{twnumber}
Let $L$ be a Legendrian circle on a surface $\Sigma$ and let $\tw(L, \Fr \Sigma)$ denote the twisting
of the contact planes along $L$ measured by the framing given by $\Sigma$. Then $\Sigma$ can be perturbed to be
convex while fixing $L$ if and only if $\tw(L, \Fr \Sigma)\leq0$. If $\Sigma$ is a convex surface with dividing curve $\Gamma$, then
$$\tw(L, \Fr \Sigma)=-\frac{1}{2}\#(L\cap\Gamma)$$
where $\#(L\cap\Gamma)$ is the unsigned count of the intersection number of $L$ and $\Gamma$. Moreover, if $\Sigma$ is a Seifert
surface of a single oriented Legendrian circle $L$, the above formula computes the Thurston–Bennequin invariant
$\tb(L)$ of $L$. In this case, the rotation number $\rot(L)$ of $L$ is
$$\rot(L) = \chi(\Sigma^+)-\chi(\Sigma^-).$$
\end{lemma}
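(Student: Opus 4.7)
The plan is to work locally in a standard contact neighborhood of $L$ and then piece this together with Giroux's convex surface machinery recalled in section 3 of \cite{H}. For the ``if'' direction of the first claim, assume $\tw(L,\Fr\Sigma)\leq 0$; a tubular neighborhood of the Legendrian circle $L$ is contactomorphic to a model neighborhood of the zero section in $J^{1}(S^{1})$, and $\Sigma$ meets this neighborhood in an annulus $A$ whose characteristic foliation records the twisting. When $\tw\leq 0$ one can $C^{0}$-perturb $A$ rel $L$ so that its characteristic foliation is ``divided'' by $-2\tw(L,\Fr\Sigma)$ arcs meeting $L$ transversally; outside the neighborhood Giroux's genericity theorem produces a convex perturbation of the rest of $\Sigma$ without altering $A$. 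The ``only if'' direction will follow from the formula proved next.

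For the twisting formula, choose a contact vector field $v$ transverse to the convex surface $\Sigma$, giving an $I$-invariant product neighborhood $\Sigma\times[-\epsilon,\epsilon]$ in which the contact structure has the form $\ker(u\,dt+\beta)$ for a function $u$ and a $1$-form $\beta$ on $\Sigma$; the dividing set is $\Gamma=\{u=0\}$. Along $L$ the contact framing is obtained from the $\Sigma$-framing by a rotation whose angular speed is governed by $u$, and a half-twist of the contact planes across $T\Sigma$ occurs exactly at each sign change of $u$, i.e.\ at each transverse intersection of $L$ with $\Gamma$. With the orientation conventions of \cite{H} each crossing contributes $-\tfrac{1}{2}$, yielding
\[
\tw(L,\Fr\Sigma)=-\tfrac{1}{2}\#(L\cap\Gamma).
\]
In particular the right-hand side is automatically non-positive, which closes the remaining implication of the first assertion.

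When $\Sigma$ is a Seifert surface for $L$, the $\Sigma$-framing agrees by definition with the Seifert framing used to define $\tb$, so the twisting formula computes $\tb(L)$ directly. For the rotation number I would interpret $\rot(L)$ as the relative Euler number of $\xi|_{\Sigma}$ trivialized along $L$ by the oriented tangent $\dot L$. The characteristic foliation $\Sigma_\xi$ provides a section of $\xi|_{\Sigma}$ away from its singularities, and these singularities are contained in $\Sigma^{+}\sqcup\Sigma^{-}$. Applying Poincar\'e--Hopf to $\Sigma^{+}$ and $\Sigma^{-}$ separately, and observing that singularities in $\Sigma^{+}$ (resp.\ $\Sigma^{-}$) contribute to the relative Euler number with the same (resp.\ opposite) sign as their foliation index, gives $\rot(L)=\chi(\Sigma^{+})-\chi(\Sigma^{-})$. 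The main obstacle is the perturbation step in the first assertion: keeping $L$ pointwise fixed while globally making $\Sigma$ convex requires a genuine appeal to Giroux's realization lemma in a neighborhood of $L$, whereas the remaining assertions reduce to a local calculation in the $I$-invariant model together with standard index bookkeeping.
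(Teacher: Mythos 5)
The paper does not prove this lemma at all: it is quoted verbatim as Proposition 3.2 of \cite{O}, which in turn rests on the standard convex-surface results of Kanda, Giroux, and Etnyre--Honda. So there is no in-paper argument to compare against; what you have written is a reconstruction of the standard proof, and it is essentially correct. Your route --- standard $J^1(S^1)$ neighborhood of $L$ plus Giroux genericity rel the annulus for the perturbation statement, the $I$-invariant model $\ker(u\,dt+\beta)$ with $\Gamma=\{u=0\}$ for the twisting formula, and Poincar\'e--Hopf applied separately to $\Sigma^{+}$ and $\Sigma^{-}$ for the rotation number --- is exactly the classical argument, and deriving the ``only if'' direction from the non-positivity of $-\frac12\#(L\cap\Gamma)$ is the right logical organization. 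Two points deserve slightly more care if this were written out in full. First, the phrase ``a half-twist of the contact planes across $T\Sigma$ occurs at each sign change of $u$'' should be sharpened: since $\xi$ and $T\Sigma$ both contain $TL$ along $L$, the correct statement is that the line $\xi\cap\nu$ in a normal plane $\nu$ to $L$ crosses the vertical direction $\partial_t$ precisely where $u=0$, and each such crossing is transverse and contributes $-\tfrac12$ relative to the surface framing. Second, the Poincar\'e--Hopf bookkeeping needs the characteristic foliation to be transverse to $\Gamma$, exiting $\Sigma^{+}$ and entering $\Sigma^{-}$ (which is what makes $\sum\mathrm{ind}=\chi(\Sigma^{\pm})$ on each piece), and the boundary component $L$ itself, along which the foliation is tangent rather than transverse, has to be handled by the usual collar argument; you gloss over this but it is standard. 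Neither point is a genuine gap.
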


Put $(a,b)$ torus knot $K$ on a convex Heegaard torus $T$ as a Legendrian ruling; and denote the number of components of divides by $\#\Gamma_T=2n, n\in\mathbb{Z}_{>0}$. The slope of T is denoted by $s(T)=-\frac{s}{t}$ where $s,t$ are co-prime positive integers satisfying $-\frac{s}{t}\in(-\frac{p}{q}, 0)$. By lemma \ref{twnumber}, $$\tw(K,\Fr T)=-n\lvert\det\begin{pmatrix}a&t\\b&-s\end{pmatrix}\rvert=-n\lvert as+bt\rvert$$
Together with the fact
$$\tb K=\tw(K,\Fr T)+ab+\frac{b^2q}{p},$$
we derive that a Legendrian $(a,b)$ torus knot $K$ reaches the max $\tb$ that its knot type permits if and only if $\tw(K,\Fr T)$ reaches its max. In order to find out $$\overline{\tw}(K_{(a,b)},\Fr T):=\max\{\tw(K,\Fr T)|K ~\text{is a Legendrian}~ (a,b)~ \text{knot in} ~(L(p,q),\xi_{ut}) \},$$
it suffice to consider when $K$ is a Legendrian ruling curve or a Legendrian divide of $T$. By the paragraph after lemma 4.10 of \cite{EH}, one can always assume that $\#\Gamma_T=2$ when $K$ reaches $\overline{\tw}(K_{(a,b)},\Fr T)$. More precisely, it suffice to find out $$\max\{-|as+bt|:\text{irreducible fraction } -\frac{s}{t}~\text{ranges in}~(-\frac{p}{q},0)\}$$ for fixed number $a$ and $b$.

In order to compute $\rot(K)$, Onaran use the method developed in \cite{EH}. Define $\mathbb{Z}-$linear function $$f_T: H_1(T)\rightarrow\mathbb{Z}$$ as follows: Let $v$ be a nowhere-vanishing section of $\xi_{ut}$, and $w$ be a nowhere-vanishing section of $\xi_{ut}|_T$, such that $w$ is tangent to Legendrian divides and transverse to Legendrian ruling curves. For an oriented close curve $\gamma$ on $T$, let $f_T(\gamma)$ be the rotation number of $w$ relative to $v$ along $\gamma$. Let $K$ be a Legendrian ruling or divides of order $r$ on $T$, then $\rot(K)$ can be computed by the formula below:
\begin{equation}\label{rot}
    r\rot(K)=f_T(r[K])=(ra+\frac{rbq}{p})f_T(\mu_1)+\frac{rb}{p}f_T(\mu_2).
\end{equation}

\begin{remark}Onaran call torus knot $K_{(a,b)}$ \textbf{positive} if $a,b>0$, call it \textbf{negative} if $a\leq0, b\geq0$. In this article, we will always assume that $-a$ and $b$ are relatively prime non-negative integers. In order to prove main theorem \ref{thm1}, we discuss Legendrian negative torus knots in all situations: 
\begin{itemize}
\item $\frac{b}{a}=0$, $K_{(a,b)}$ is unknot $\pm\mu_1$ which is classified in \cite{EF},
\item$\frac{b}{a}\in(-1, 0)$,
\item$\frac{b}{a}\in[-\frac{p'}{q'}, -1]$,
\item$\frac{b}{a}\in(-\frac{p}{q},-\frac{p'}{q'})$,
\item $\frac{b}{a}=-\frac{p}{q}$, $K_{(a,b)}$ is unknot $\pm\mu_2$ which is classified in \cite{EF},
\item $\frac{b}{a}\in(-\infty, -\frac{p}{q})$,
\item
$\frac{b}{a}=-\infty$, $K_{(a,b)}$ is isotopic to $-K_{(1,1)}$ which is classified in \cite{O}.
\end{itemize}
Then we apply the strategy \ref{strategy} in each cases.
\end{remark}

\section{For the case of $\frac{b}{a}\in[-\frac{p'}{q'}, -1]$}
~~~~Firstly, $\overline{\tw}(K_{(a,b)},\Fr T)=0$, since we can choose $K$ to be a Legendrian divide of a convex Heegaard torus $T$ of slope $s(T)=-\frac{s}{t}=\frac{b}{a}$. i.e. We take $s=b$ and $t=-a$. By assumption before, $s,t$ are positive co-prime integers.

Now, assume $K$ is a Legendrian divide of a linear foliated convex Heegaard torus $T$ with $\#\Gamma_T=2$ and slope $s(T)=-\frac{s}{t}=\frac{b}{a}$. To compute $\rot(K)$, we firstly compute $f_T(\mu_i)$ for $i=1,2$. Actually, if $T$ has Legendrian ruling of homotopy type $\mu_i$, then $f_T(\mu_i)$ is exactly the rotation number of a closed Legendrian ruling curve of type $\mu_i$.

Since $\frac{b}{a}\in[-\frac{p'}{q'}, -1]$, one can find a convex Heegaard torus $T_2$ above $T$ with two divides and $s(T_2)=-\frac{p'}{q'}$. The ``above" means that $T$ has the orientation induced from $V_1$ s.t. the ``above" direction of $T$ and the orientation of $T$ forms an orientation the same as $L(p,q)$. Similarly, one can find a convex Heegaard torus $T_1$ below $T$ with two divides and $s(T_1)=-1$. The thickened torus bounded by $T_1$ and $T_2$ is exactly a characteristic thickened torus (denoted by $N$). Let $N_i$ be the thickened torus bounded by $T$ and $T_i$, for $i=1,2$. The relative Euler class of $N, N_1, N_2$ is denoted by $, e_{N_1}, e_{N_2}$. By assumption, $$e_N= P.D.((-q',p')-(-1,1)).$$Hence $N$ is universally tight by prop.5.1 of \cite{H} and so do $N_1$ and $N_2$. Together with the fact that $$e_N= e_{N_1}+ e_{N_2},$$ we have 
$$
\begin{cases}
e_{N_1}=P.D.((-q',p')-(-t,s)),\\
e_{N_2}=P.D.((-t,s)-(-1,1)).
\end{cases}
$$
\subsection{computation of $f_T(\mu_i)$, $i=1,2$}\label{1stcomputation}
~~~~Perturb $T$ and $T_1$ such that they have ruling slope $0$. Let $A_1\cong\gamma\times I$ be a convex annulus with Legendrian boundary and properly embedded in $N_1\cong\partial V_1\times I\cong T^2\times I$. Here, both $\gamma\times0$, $\gamma\times1$ are supposed to be Legendrian ruling curve: $\gamma\times1=T\cap A_1$ and $\gamma\times0=T_1\cap A_1$. Also, assume $A_1$ has the orientation such that $\partial A_1=\gamma\times1-\gamma\times0$ where $\gamma$ is an oriented knot with homotopy type $(1,0)$ on torus. i.e. $\partial A_1$ is the union of a $\mu_1-$curve on $T$ and a $(-\mu_1)-$curve on $T_1$. Then, by prop 4.5 of \cite{H}, we have:
$$
\chi(A_1^+)-\chi(A_1^-)=\langle e_{N_1},(1,0) \rangle=\det\begin{pmatrix}
 -t+1 & 1\\
 s-1 & 0\\
\end{pmatrix}=1-s
$$
where $\langle-,-\rangle$ is Kronecker product. Using lemma \ref{twnumber}, we can also compute that 
$$
\Gamma_{A_1}\cap(\gamma\times1)=-2\tw(\gamma\times1,\Fr{A_1})=-2\tw(\gamma\times1,\Fr{T})=\Gamma_{T}\cap(\gamma\times1)=2|\det\begin{pmatrix}
 1 & -t\\
 0 & s\\
\end{pmatrix}|=2s
$$ and
$$
\Gamma_{A_1}\cap(\gamma\times0)=-2\tw(\gamma\times0,\Fr{A_1})=-2\tw(\gamma\times0,\Fr{T_1})=\Gamma_{T_1}\cap(\gamma\times0)=2|\det\begin{pmatrix}
 1 & -1\\
 0 & 1\\
\end{pmatrix}|=2
.$$
From above computations and tightness of $N_1$, $\Gamma_{A_1}$ must looked like below. $\Gamma_{A_1}$ consists of two arcs connecting $\gamma\times1$ to $\gamma\times0$ and $(s-1)$ boundary parallel dividing curves along $\gamma\times0$. The signature $\pm$ in the figure below is the signature of divergence.  
\begin{figure}[H]
    \centering
    \includegraphics[scale=1]{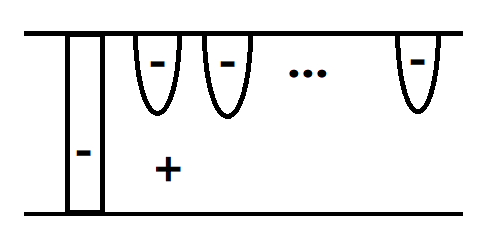}
            \caption{The upper horizontal straight line is $T\cap A_1$; the lower is $T_1\cap A_1$.}
\end{figure}
Let $D_{1,0}$ be the convex meridian disc of solid torus $V_1$ bounded by $T_1$ with orientation s.t. $\partial D_{1,0}=\gamma\times0\subset\partial A_1$. The meiridian and longitude of $V_1$ is exactly $(\mu_1,\lambda_1)$. From the computation of $\Gamma_{A_1}\cap(\gamma\times0)$, we know that $\Gamma_{D_{1,0}}$ is a diameter of $D_{1,0}$. Let $D_1=A_1\cup D_{1,0}$ be the bigger meridian disc bounded by $\mu_1-$curve on $T$, then $D_1$ has orientation compatible with both $A_1$ and $D_{1,0}$. $\Gamma_{D_1}$ consists of exactly $s$ boundary parallel arcs.
\begin{figure}[H]
    \centering
    \includegraphics[scale=1]{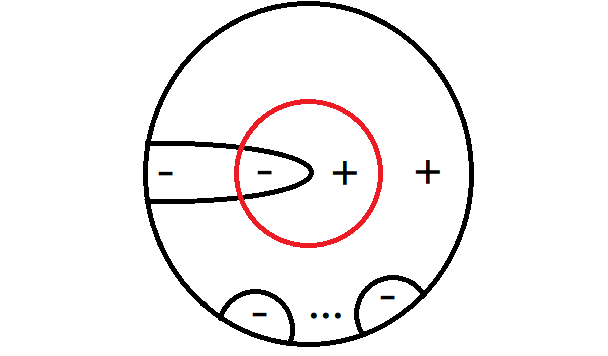}
    \caption{The red is $\gamma\times0$. Inside the red line is $D_{1,0}$ and outside the red line is $A_1$.}
\end{figure}
\noindent Therefore, we have:
$$f_T(\mu_1)=\rot(\gamma\times1, D_1)=\chi(D_1^+)-\chi(D_1^-)=1-s.$$

Similarly, in order to compute $f_T(\mu_2)$, one should perturb $T$ and $T_2$ s.t. they have ruling slope $-\frac{p}{q}$. Let $A_2=\delta\times I$ be a convex annulus with Legendrian boundary properly embedded in $N_2\cong\partial V_1\times I\cong T^2\times I$. Here, both components of $\delta\times\{0,1\}$ are Legendrian ruling curves: $\delta\times1=T_2\cap A_2$ and $\delta\times0=T\cap A_2$. Also, assume $A_2$ has the orientation such that $\partial A_2=\delta\times1-\delta\times0$ and $\delta$ is an oriented knot with homotopy type $(-q,p)$ on torus. i.e. $\partial A_2$ is the union  of a $\mu_2-$curve on $T_2$ and a $(-\mu_2)-$curve on $T$.

We can compute data below:
$$
\Gamma_{A_2}\cap(\delta\times1)=-2\tw(\delta\times1,\Fr{A_2})=-2\tw(\delta\times1,\Fr{T_2})=\Gamma_{T_2}\cap(\delta\times1)=2|\det\begin{pmatrix}
 -q' & -q\\
 p' & p\\
\end{pmatrix}|=2
,$$ 
$$
\Gamma_{A_2}\cap(\delta\times0)=-2\tw(\delta\times0,\Fr{T})=\Gamma_{T}\cap(\delta\times0)=2|\det\begin{pmatrix}
 -q & -t\\
 p & s\\
\end{pmatrix}|=2(pt-qs)>0,
$$and
$$
\chi(A_2^+)-\chi(A_2^-)=\langle e_{N_2},(-q,p) \rangle=\det\begin{pmatrix}
 -q'+t & -q\\
 p'-s & p\\
\end{pmatrix}=-1+pt-qs.$$
From above, $\Gamma_{A_2}$ must consists of two arcs connecting $\delta\times1$ to $\delta\times0$ and $(pt-qs-1)$ boundary parallel arcs along $\delta\times0=T\cap A_2$.

Let $D_{2,0}$ denote the convex meridian disc of solid torus $V_2$ bounded by $T_2$ with orientation s.t. $\partial D_{2,0}=\delta\times1$. The meridian and longitude of $V_2$ is exactly $(\mu_2,\lambda_2)$. From the computation of $\Gamma_{A_2}\cap(\delta\times1)$, $\Gamma_{D_{2,0}}$ is simply a diameter of $D_{2,0}$. Let $D_2=(-A_2)\cup D_{2,0}$, then $\partial D_2=\delta\times0\subset T$ is a $\mu_2-$curve on $T$. By $\Gamma_{D_2}=\Gamma_{A_2}\cup \Gamma_{D_{2,0}}$, we have:
$$f_T(\mu_2)=\rot(\delta\times0, D_2)=\chi(D_2^+)-\chi(D_2^-)=1-(pt-qs).$$
Together above, we conclude:
\begin{theorem}
Assume $-a, b$ are co-prime positive integers and $\frac{b}{a}\in[-\frac{p'}{q'}, -1]$. Let $K$ be a Legendrian $(a,b)$ torus knot with max $\tb$ number in $(L(p,q),\xi_{ut})$. Here, $\xi_{ut}$ is determined by the relative Euler class $P.D.((-q',p')-(-1,1))$ of characteristic thickened torus with standard foliated boundary. Then $$\rot(K)=a+b\frac{1+q}{p}.$$
\end{theorem}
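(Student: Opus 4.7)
The plan is to combine the rotation-number formula \eqref{rot} with the two values $f_T(\mu_1)$ and $f_T(\mu_2)$ already obtained in subsection \ref{1stcomputation}.

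First I would pin down the torus carrying $K$. The discussion preceding Section~3 reduces maximizing $\tb$ to maximizing $\tw(K, \Fr T)$, and the choice $-s/t = b/a$, i.e.\ $s = b$ and $t = -a$, realizes $\tw(K, \Fr T) = 0$ by placing $K$ as a Legendrian divide on a convex Heegaard torus $T$ with $\#\Gamma_T = 2$ and slope $s(T) = b/a$. Both $s$ and $t$ are positive coprime integers because $-a, b > 0$, and the slope $b/a \in [-p'/q', -1]$ lies inside the admissible range $(-p/q, 0)$ since $pq' - p'q = 1$ forces $-p'/q' > -p/q$. Hence $T$ lies inside the characteristic thickened torus, matching the framework of subsection \ref{1stcomputation} exactly.

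Second I would quote from subsection \ref{1stcomputation} the formulas $f_T(\mu_1) = 1 - s = 1 - b$ and $f_T(\mu_2) = 1 - (pt - qs) = 1 + pa + qb$, then substitute into
\[
r \rot(K) = \left(ra + \frac{rbq}{p}\right) f_T(\mu_1) + \frac{rb}{p}\, f_T(\mu_2).
\]
Dividing by $r$ and expanding, the $\pm ab$ terms cancel against each other and so do the $\pm b^{2}q/p$ terms, leaving $a + bq/p + b/p = a + b(1+q)/p$, which is the claimed value.

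The theorem is therefore essentially bookkeeping once subsection \ref{1stcomputation} is in hand; the substantive geometric content has already been carried out there, namely fixing the dividing sets of two convex annuli $A_1 \subset N_1$, $A_2 \subset N_2$ by combining the intersection counts coming from Lemma~\ref{twnumber} with the Euler-class pairings forced by the splitting $e_N = e_{N_1} + e_{N_2}$, then extending each $A_i$ to a meridian disc of $V_i$ so that $f_T(\mu_i)$ becomes $\chi(D_i^+) - \chi(D_i^-)$. The only pitfall specific to the present step is matching sign conventions between the subsection's $s, t > 0$ notation and the identification $(s,t) = (b, -a)$, together with verifying $pt - qs > 0$; both are automatic from $b/a > -p/q$. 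After these checks the result follows by direct substitution.
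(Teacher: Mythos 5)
Your proposal is correct and matches the paper's argument, which likewise reduces the theorem to substituting $f_T(\mu_1)=1-s=1-b$ and $f_T(\mu_2)=1-(pt-qs)=1+pa+qb$ from subsection \ref{1stcomputation} into formula \eqref{rot}; the cancellation of the $\pm ab$ and $\pm b^2q/p$ terms goes exactly as you describe. The additional sanity checks you include (that $b/a$ lies in $(-p/q,0)$ and that $pt-qs>0$) are consistent with the paper's standing assumptions and do not change the route.
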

\begin{proof}
this can be reached from the direct computation by formula \ref{rot}.
\end{proof}
\subsection{proof of main theorem in this case}
The main theorem \ref{thm1} in this case can be easily concluded by two following lemmas.
\begin{lemma}
Assume $-a, b$ are co-prime positive integers and $\frac{b}{a}\in[-\frac{p'}{q'}, -1]$. Let $K$ and $K'$ be two Legendrian $(a,b)$ torus knots with max $\tb$ number in $(L(p,q),\xi_{ut})$. Here, $\xi_{ut}$ is determined by the relative Euler class $P.D.((-q',p')-(-1,1))$ of characteristic thickened torus with standard foliated boundary. Then there is a contactomophism $\phi$ s.t. $\phi(K)=K'.$
\end{lemma}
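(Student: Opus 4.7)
The plan is to mimic the strategy of Section 4 of \cite{EH}: realize $K$ and $K'$ as Legendrian divides of convex Heegaard tori attaining $\overline{\tw}$, show that these tori are contact-isotopic inside $(L(p,q),\xi_{ut})$, and then match the resulting Legendrian divides.

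First I would place $K$ as a Legendrian divide of a linear-foliated convex Heegaard torus $T$ with $\#\Gamma_T=2$ and slope $s(T)=\frac{b}{a}$, and similarly place $K'$ on $T'$. Since $\frac{b}{a}\in[-\frac{p'}{q'},-1]$ lies in the range of slopes realized by convex tori inside the characteristic thickened torus $N$ (bounded below by $T_1$ of slope $-1$ and above by $T_2$ of slope $-\frac{p'}{q'}$), both $T$ and $T'$ can be arranged to sit inside $N$ after a smooth isotopy preserving convexity and the dividing data.

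I would then invoke Honda's classification of convex tori in a universally tight $T^2\times I$ from section 4 of \cite{H}: because $N$ is universally tight with prescribed boundary dividing sets and relative Euler class, a convex torus inside $N$ with $\#\Gamma=2$ and slope $\frac{b}{a}$ is unique up to contact isotopy rel $\partial N$. Hence there is a contact isotopy of $N$ carrying $T$ to $T'$, which extends by the identity across $\partial N$ to an ambient contact isotopy of $L(p,q)$. Under this isotopy, $K$ is taken to some Legendrian divide $K''$ of $T'$, and a further contact isotopy supported in a standard invariant neighborhood of $T'$ (translation along the closed leaves of the linear characteristic foliation) moves $K''$ onto $K'$; the composition gives the desired contactomorphism $\phi$ with $\phi(K)=K'$.

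The main obstacle is the uniqueness step: one must verify carefully that both $T$ and $T'$, after a smooth isotopy respecting their convexity and dividing data, can indeed be placed inside $N$, and then apply Honda's uniqueness of convex tori of a fixed slope inside a universally tight $T^2\times I$ with prescribed boundary. A secondary subtlety is that a convex torus with $\#\Gamma=2$ carries two Legendrian divides in the oriented class $a\mu_1+b\lambda_1$, one in each $\pm$ region; one has to check that they are related by a contactomorphism supported in an invariant neighborhood of $T'$ and smoothly isotopic to the identity, which reduces to an explicit model computation on $T'\times[-\epsilon,\epsilon]$ where a half-translation transverse to the divides swaps them while preserving the contact kernel.
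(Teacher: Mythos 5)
Your proposal follows essentially the same route as the paper: both realize $K$ and $K'$ as Legendrian divides of convex Heegaard tori of slope $\frac{b}{a}$ with two dividing curves, split $(L(p,q),\xi_{ut})$ along $T$, $T_1$, $T_2$ (resp.\ the primed tori) into thickened tori and solid tori, and use Honda's uniqueness of the universally tight structures on these pieces to assemble a contactomorphism carrying $T$ to $T'$ and hence $K$ to $K'$. Your extra attention to matching the two Legendrian divides on $T'$ is a point the paper's proof passes over silently, and is a welcome addition; note only that for this lemma a contactomorphism built by gluing the layer maps suffices, so the stronger (and less directly cited) claim of uniqueness of the convex torus up to contact isotopy rel $\partial N$ is not actually needed.
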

\begin{proof}
Assume $K'$ is a Legendrian divide of a convex Heegaard torus $T'$ with two divides of slope $s(T')=\frac{b}{a}$. Then, there exist a convex Heegaard torus $T_2'$ with two divides of slope $\frac{p'}{q'}$ and a convex Heegaard torus $T_1'$ with two divides of slope $-1$. Let $N_i'$ denote the thickened torus bounded by $T'$ and $T_i'$. Similarly, we define $T$, $N_i$ for $K$. Construct contactomorphism of the relevant layers and glue the maps on each pieces together. The proof is done.

More precisely, if the ruling slope of $T_i$ and $T_i'$ are agree (so do $T$ and $T'$), there is a contactomorphism from $N_i$ to $N_i'$. Also, we can find contactomophism from solid torus $V_i$ to solid torus $V_i'$. Here, $V_i$ (correspondingly, $V_i'$) is a solid torus with its meridian $\mu_i$ and boundry $T_i$ (correspondingly, $T_i'$).
\end{proof}
\begin{lemma}
Assume $-a, b$ are co-prime positive integers and $\frac{b}{a}\in[-\frac{p'}{q'}, -1]$. Let $K'$ be a Legendrian $(a,b)$ torus knot which does not reach max $\tb$ number in $(L(p,q),\xi_{ut})$. Here, $\xi_{ut}$ is determined by the relative Euler class $P.D.((-q',p')-(-1,1))$ of characteristic thickened torus with standard foliated boundary. Then $K'$ destabilizes.
\end{lemma}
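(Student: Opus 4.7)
The plan is to follow the Etnyre--Honda destabilization scheme \cite{EH}. The goal is to produce a bypass disk along $K'$, since by the standard bypass-to-destabilization correspondence such a disk isotopes $K'$ to a stabilization $S_\pm K''$ of a Legendrian $(a,b)$ knot $K''$ with $\tb(K'')=\tb(K')+1$, which is exactly what destabilization means. By the preparation lemma and lemma \ref{twnumber} I may realize $K'$ as a Legendrian ruling on some convex Heegaard torus $T'$; the hypothesis $\tb(K')<\overline{\tb}$ combined with the identity $\tw(K,\Fr T)=-n|as+bt|$ and $\tb K=\tw(K,\Fr T)+ab+b^2q/p$ forces $\tw(K',\Fr T')<0$ on this $T'$.

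The key geometric input is the canonical convex Heegaard torus $T^*$ of slope $b/a$ with $\#\Gamma_{T^*}=2$, whose existence was established in the previous subsection (the torus $T$ there). Using Honda's state-traversal and bypass-attachment techniques inside the characteristic thickened torus, I isotope $T'$ through a family of convex Heegaard tori, keeping $K'$ on it throughout, until $K'$ lies in a $T^2\times I$ sub-cobordism $N^*$ cobounded by $T'$ and $T^*$; this sub-cobordism inherits a universally tight contact structure whose dividing data is controlled by the relative Euler class computed in \S\ref{1stcomputation}. I then build a convex annulus $A\subset N^*$ with Legendrian boundary $\partial A=K'\cup L^*$, where $L^*$ is a Legendrian divide of $T^*$ of homotopy class $(a,b)$, so that $\tw(L^*,\Fr T^*)=0$.

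By lemma \ref{twnumber} the strict inequality $\tw(K',\Fr T')<0=\tw(L^*,\Fr T^*)$ translates into $\#(K'\cap\Gamma_{T'})>\#(L^*\cap\Gamma_{T^*})$ along $\partial A$. Tightness of $N^*$ rules out closed components of $\Gamma_A$, so at least one arc of $\Gamma_A$ must be boundary-parallel on the $K'$ side; such an arc is a bypass disk for $K'$, and its sign is determined by the relative Euler class already computed. Isotoping $K'$ across this bypass produces the required $K''$ with $\tb(K'')=\tb(K')+1$ and completes the destabilization.

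The main obstacle is the state-traversal step: ensuring that after the isotopies, $K'$ actually lies in a cobordism between $T'$ and a copy of the canonical $T^*$. This requires a bookkeeping argument using Honda's imbalance principle to simultaneously thin $\Gamma_{T'}$ down to two divides and shift $s(T')$ into agreement with $b/a$, all without moving $K'$ off its ambient convex torus; the universal tightness of the characteristic thickened torus is the essential hypothesis making these manipulations possible.
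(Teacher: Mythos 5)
Your proposal is correct and follows essentially the same route as the paper: place $K'$ on a convex Heegaard torus $T'$ with $\tw(K',\Fr T')<0$, take a disjoint convex Heegaard torus of slope $\frac{b}{a}$ with two dividing curves carrying a Legendrian divide of twisting $0$, and apply the imbalance principle to a convex annulus between the two curves to find a boundary-parallel dividing arc (bypass) along $K'$. The "state traversal" worry you flag at the end is not actually needed, since any two disjoint Heegaard tori in $L(p,q)$ automatically cobound a $T^2\times I$; the paper simply chooses the slope-$\frac{b}{a}$ torus disjoint from $T'$ and proceeds directly.
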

\begin{proof}
Let $K'$ sit on a convex Heegaard torus $T'$. Let a convex Heegaard torus $T$ have two divides of slope $\frac{b}{a}$ and let $K$ be a Legendrian divide of $T$. This torus $T$ can be disjoint from $T'$. Let $A$ be a convex annulus with $\partial A=K\cup K'$ properly embedded in the thickened torus bounded by $T$ and $T'$. Notice that $$\tw(K',\Fr A)=\tw(K',\Fr T')<0$$ and $$\tw(K,\Fr A)=\tw(K,\Fr T)=0,$$ thus $\Gamma_A$ cuts off a half disc along $K'$. i.e. $K'$ is a stabilization of some Legendrian $(a,b)$ torus knot.  
\end{proof}
\begin{remark}
    The mountain range in this case has only one top.
\end{remark}
\section{For the case of $\frac{b}{a}\in(-1,0)$}
~~~~Similar with the case of $\frac{b}{a}\in[-\frac{p'}{q'}, -1]$, we have $\overline{\tw}(K_{(a,b)},\Fr T)=0$, since one can choose $K$ to be a Legendrian divide of a convex Heegaard torus $T$ of slope $s(T)=-\frac{s}{t}=\frac{b}{a}$. We still assume $s=b$ and $t=-a$ are positive integers where $s,t$ are relatively prime. There exist a unique positive integer $m$ s.t.$-\frac{1}{m+1}\geq-\frac{s}{t}>-\frac{1}{m}$.

In order to compute $f_T$, we mimic the approach in section 4.2 of \cite{EH}. Take $T_1$ and $T_2$ above $T$ which described in the former section. Let $T_{1,m+1}$ (relatively, $T_{1,m}$) be a convex Heegaard torus having two divides of slope $-\frac{1}{m+1}$ (relatively, $-\frac{1}{m}$). $T$ is between $T_{1,m+1}$ and $T_{1,m}$. Still, Let $N_1$ denote the thickened torus bounded by $T$ and $T_1$, although $N_1$ is outside characteristic thickened torus $N$ in this case. Let $N_{1,m+1}$ be the thickened torus bounded by $T$ and $T_{1,m+1}$ and let $V_1$ be the solid torus bounded by $T_1$ with its meridian and longitude $(\mu_1,\lambda_1)$.

Secondly, let us find out the relative Euler class of these layers.
\begin{proposition}[prop.4.13 of \cite{H}]\label{prop.4.13}
Assume $\xi$ is a min-twist tight contact structure on $M=T^2\times I$ with boundary condition:
$$\begin{cases}
s_1=\text{the slope of } T^2\times 1=-1,\\
s_0=\text{the slope of } T^2\times 0=-\frac{1}{m}, m\in\mathbb{Z}_{\geq2},\\
\#\Gamma_{T^2\times i}=2, i=0,1,\\
{T^2\times i}=2 \text{are linear foliated for } i=0,1.
\end{cases}$$
Then, $\xi$ is $1-1$ determined by the relative Euler class $e_M$, or equivalently, 
$$\langle e_M, A\rangle=\chi(A^+)-\chi(A^-)=\#\text{positive half-disc}-\#\text{negative half-disc}$$
where $A=0\times S^1\times I$ is a convex annulus with Legendrian boundary and $\#\text{positive/negative half-disc}$ is the number of positive/negative half-discs which cut off by $\Gamma_A$.
\end{proposition}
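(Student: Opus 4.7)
The plan is to follow Honda's basic-slice decomposition strategy. First, I would subdivide $M=T^2\times I$ by inserting convex tori of slope $-1/k$ with two dividing curves, for $k=2,\ldots,m-1$, interpolating between $s_0=-1/m$ and $s_1=-1$. Consecutive slopes $-1/k$ and $-1/(k+1)$ are Farey neighbors, so each of the $m-1$ resulting layers is a basic slice. By Honda's basic-slice classification, each layer carries exactly two tight contact structures, distinguished by the sign of its relative Euler class.

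Second, I would combine the layer-wise classifications. The relative Euler class is additive across the decomposition, so $e_M$ is recorded by a sign vector $(\epsilon_1,\ldots,\epsilon_{m-1})\in\{\pm 1\}^{m-1}$. The shuffling lemma of \cite{H} says that within a single continued-fraction block---which is precisely what the Farey path $-1/m,-1/(m-1),\ldots,-1$ forms, since all these slopes share the common Farey neighbor $0$---two sign vectors yield isotopic tight contact structures if and only if they have the same multiset of signs. Thus $\sum_i\epsilon_i$, equivalently $e_M$, is a complete invariant, establishing the claimed $1$-$1$ correspondence.

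Third, for the explicit formula, the convex annulus $A=\{0\}\times S^1\times I$ has Legendrian boundary, and its dividing set $\Gamma_A$ partitions $A$ into regions $A^\pm$. I would choose a section $v$ of $\xi$ that trivializes along $\partial M$ in a manner compatible with the characteristic foliations and is positively transverse to $A^+$; the Poincar\'e--Hopf type obstruction to extending $v$ across $A$ computes $\langle e_M, A\rangle=\chi(A^+)-\chi(A^-)$, parallel to the rotation-number formula in Lemma \ref{twnumber}. Because the components of $A\setminus\Gamma_A$ are topological half-discs (each $\Gamma_A$-arc is either boundary-parallel or stretches between the two boundary circles), this Euler characteristic is exactly the signed count of half-discs.

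The main obstacle will be rigorously justifying the shuffling equivalence: one must verify that all $m-1$ basic slices genuinely lie in one continued-fraction block, and then invoke bypass-sliding across adjacent layers to permute signs without changing the isotopy class. A parallel check is that distinct total Euler classes yield non-isomorphic structures, which follows from $e_M$ being a genuine isotopy invariant of the convex boundary data.
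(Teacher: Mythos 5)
Your proposal is correct and follows essentially the same route as the paper: the paper reduces to Honda's Prop.~4.12 via the coordinate change $(x,y,t)\mapsto(y,x,1-t)$, and that proposition is itself proved by exactly the basic-slice factorization into the Farey-neighbor layers $-\frac{1}{m},\dots,-\frac{1}{2},-1$ plus the shuffling lemma that you spell out, with the pairing $\langle e_M,A\rangle=\chi(A^+)-\chi(A^-)$ supplied by Honda's Prop.~4.5 in both treatments. One small imprecision in your last step: two components of $A\setminus\Gamma_A$ are through-going rectangles rather than half-discs cut off by boundary-parallel arcs, so the identity with the signed half-disc count holds because those two regions carry opposite signs and cancel, not because every region is a cut-off half-disc.
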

\begin{proof}
Do an diffeomorphism on $T^2\times I$ given by $(x,y,t)\rightarrow(y,x,1-t)$. Then the boundary conditions of $\xi$ is the same as in prop.4.12 of \cite{H}. The dividing set $\Gamma_A$ is looked like below. $\Gamma_A$ consists of two dividing curves connecting $0\times S^1\times1$ to $0\times S^1\times0$ and $m-1$ boundary parallel dividing curves along $0\times S^1\times0$.
\begin{figure}[H]
    \centering
    \includegraphics[scale=1]{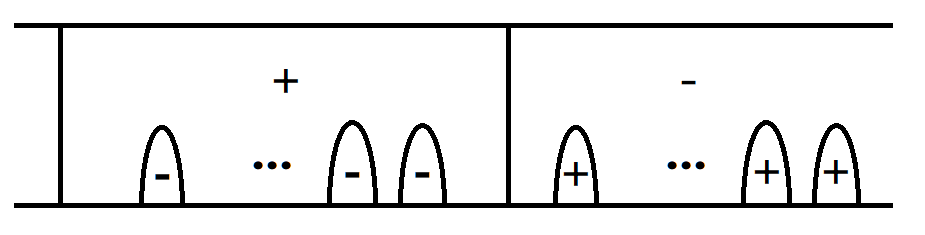}
            \caption{the upper horizontal straight line is $0\times S^1\times1$; the lower is $0\times S^1\times0$.}
\end{figure}
\noindent We have $$\#\text{positive half-disc}+\#\text{negative half-disc}=m-1.$$

Therefore, $e_M=P.D.(x,0)$ , where $x$ ranges in m values: $m-1,m-3,\dots,-(m-1)$. We can also write $e_M$ in another form (see prop.4.22 of \cite{H})
$$e_M=P.D.\{[(1,-1)-(k,-1)]+[-(k,-1)+(m,-1)]\}$$
or
$$e_M=P.D.\{[-(1,-1)+(k,-1)]+[(k,-1)-(m,-1)]\}$$
where $k$ ranges in $1,2,\dots,m$. From lemma 4.14 of \cite{H}, $(T^2\times I,\xi)$ is a contact structure obtained from gluing two universally tight layers together which relative Euler class are $P.D.[(1,-1)-(k,-1)]$ and $P.D.[-(k,-1)+(m,-1)]$. Or equivalently, it is obtained by attaching positive(negative) bypasses along slope-$\infty$-ruling on $T^2\times0$ first, then the negatives(positives). 
\end{proof}
\begin{remark}
By analyzing the standard tubular neighborhood $V_1$ of a Legendrian circle and its stabilizations, all contact structures described above can be  embedded in $V_1$. Back into the situation in this paper, let $T_{1,k}$ be the convex Heegaard torus with two divides of slope $-\frac{1}{k}$. Here, $k$ is described in lemma \ref{prop.4.13}. Without loss of generality, assume that $T$ is in the universally tight layer described in \ref{prop.4.13} which is bounded by $T_{1,k}$ and $T_{1,m+1}$. The universally tight layer has the relative Euler class $P.D.[-(k,-1)+(m+1,-1)]$ or $P.D.[(k,-1)-(m+1,-1)]$ where $(k=1,2,\dots,m)$.
\end{remark}
\subsection{computation of $f_T(\mu_i)$, $i=1,2$}
~~~~If $T$ is in the universally tight layer as above with relative Euler class $P.D.[-(k,-1)+(m+1,-1)]$, for convenient, say $T$ is in a \textit{\textbf{positive universally tight layer}}, we have $$P.D.[-(k,-1)+(m+1,-1)]=P.D.\{[-(k,-1)+(t,-s)]+[-(t,-s)+(m+1,-1)]\}$$
which concludes that
$$e_{N_{1,m+1}}=P.D.[-(t,-s)+(m+1,-1)]$$ since a sub-layer of a universally tight thickened torus is also universally tight.
By a similar approach in \ref{1stcomputation}, we can compute that $$f_T(\mu_1)=1-s.$$

In order to compute $f_T(\mu_2)$, we first compute the relative Euler class of ${N\cup N_1}$:
$$P.D.e_{N\cup N_1}=[(-q',p')-(-1,1)]+[(1,-1)-(k,-1)]+[-(k,-1)+(t,-s)]=(-q'+t-2k+2,p'-s).$$ The first term is for $N$, the second term is for the layer bounded by $T_1$ and $T_{1,k}$ ,and the third term is for the layer bounded by $T_{1,k}$ and $T$.
Then, take a convex annulus $A_2$ with Legendrian boundary properly embedded in to $N\cup N_1$, i.e.
$$(N\cup N_1, A_2)\cong(T^2\times I, \gamma\times I)$$
where $\gamma$ is a curve with slope the same as $\mu_2$. We have
$$
\chi(A_2^+)-\chi(A_2^-)=\langle e_{N\cup N_1},(-q,p) \rangle=\det\begin{pmatrix}
 -q'+t-2k+2 & -q\\
 p'-s & p\\
\end{pmatrix}=p(t-2k+2)-qs-1.
$$
After completing $A_2$ in to a meridian disc bounded by $\gamma\times0$ , one can conclude that $$f_T(\mu_2)=-p(t-2k+2)+qs+1$$
where $k=1,2,\dots,m$.

If $T$ is in the universally tight layer with relative Euler class $P.D.[(k,-1)-(m+1,-1)]$, for convenient, say $T$ is in a \textit{\textbf{negative universally tight layer}}, we have results below:
$$f_T(\mu_1)=s-1$$
and the relative Euler class of $N\cup N_1$ is given by
$$P.D.e_{N\cup N_1}=[(-q',p')-(-1,1)]+[-(1,-1)+(k,-1)]+[+(k,-1)-(t,-s)]=(-q'-t+2k,p'+s-2)$$
and thus
$$f_T(\mu_2)=p(t-2k)-q(s-2)+1.$$
Taking these data into formula \ref{rot}, we have
\begin{theorem}
Assume $-a, b$ are co-prime positive integers and $\frac{b}{a}\in(-1,0)$. Let m be the unique positive integer s.t.$-\frac{1}{m+1}\geq-\frac{s}{t}=\frac{b}{a}>-\frac{1}{m}$. Let $K$ be a Legendrian $(a,b)$ torus knot with max $\tb$ number in $(L(p,q),\xi_{ut})$. Here, $\xi_{ut}$ is determined by the relative Euler class $P.D.((-q',p')-(-1,1))$ of characteristic thickened torus with standard foliated boundary. Then $$\rot(K)=a+b\frac{1+q}{p}+2b(k-1),~k=1,2,\dots,m.$$ Or
$$\rot(K)=-a+b\frac{1+q}{p}-2bk,~k=1,2,\dots,m.$$
By assuming $b\ne1$, or equivalently,  $K$ is not a rational unknot, these $2m$ values are distinct to each and thus is $1-1$ correspondent to the different cases of layering.
\end{theorem}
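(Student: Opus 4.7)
The plan is to dispose of the two assertions of the theorem in order: first derive the explicit rotation-number formulas, then verify distinctness of the resulting $2m$ values.

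For the formula derivation, all substantive geometric content has already been packaged in the preceding subsection, which pins down $f_T(\mu_1)$ and $f_T(\mu_2)$ in both the positive and negative universally tight layer cases. Setting $s=b$, $t=-a$, and letting $r$ denote the order of $K$ in $H_1(L(p,q))$, I would plug the known values into formula \ref{rot},
\[
r\rot(K) = \Bigl(ra + \frac{rbq}{p}\Bigr) f_T(\mu_1) + \frac{rb}{p} f_T(\mu_2).
\]
In the positive-layer case, substitution of $f_T(\mu_1)=1-b$ and $f_T(\mu_2)=-p(-a-2k+2)+qb+1$ followed by simplification (the two $rb^{2}q/p$ contributions cancel) should yield $r\rot(K) = ra + rb(1+q)/p + 2rb(k-1)$, which divides to the first claimed formula. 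The negative-layer case runs on the same template with $f_T(\mu_1)=b-1$ and the matching expression for $f_T(\mu_2)$.

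For the distinctness statement, I would organize the $2m$ candidates as two families indexed by $k\in\{1,\dots,m\}$. Within either family any two distinct values differ by $2b(k-k')$, nonzero because $b\geq 1$. A cross-family coincidence would require $a+2b(k-1)=-a-2bk'$, i.e.\ $a=-b(k+k'-1)$, so that $b\mid a$. Since $\gcd(-a,b)=1$ by hypothesis and the excluded case $b=1$ guarantees $b\geq 2$, this is impossible. I would flag that the hypothesis $b\neq 1$ is used both essentially and exactly here: when $b=1$ the knot $K_{(a,1)}$ is a rational unknot (any two $(a,1)$ representatives differ by meridian slides across discs in $V_1$), so distinctness genuinely fails in that case, consistent with the classification cited from \cite{EF}.

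The only step meriting real care is the algebraic simplification in the formula derivation, where one must track the $q/p$-fractional terms and confirm they collapse to the clean expression $b(1+q)/p$. There is no genuine mathematical obstacle: the contact-geometric substance sits entirely in the preceding convex decomposition and relative Euler class computations, and the theorem is a bookkeeping consequence of those, with the bijection between rotation numbers and layerings $(k,\pm)$ falling out of the distinctness argument.
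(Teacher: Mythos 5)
Your proposal is correct and follows essentially the same route as the paper, which proves this theorem simply by substituting the values of $f_T(\mu_1)$ and $f_T(\mu_2)$ computed in the preceding subsection (for the positive and negative universally tight layers respectively) into formula \ref{rot}. Your write-up actually supplies more detail than the paper does, in particular by spelling out the within-family and cross-family distinctness check that reduces to $b\mid a$ being impossible when $\gcd(-a,b)=1$ and $b\neq 1$.
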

\begin{remark}
The first formula of rotation number represents the case where $T$ lies in the positive universally tight layer and the second formula of rotation number is for the case where $T$ lies in the negative universally tight layer.
\end{remark}

\subsection{proof of main theorem in this case}
~~~~We can conclude the main theorem \ref{thm1} in this case by following lemmas. In this subsection, we mimic the approach in section 4.2 of \cite{EH}.
\begin{lemma}
Assume $-a, b$ are co-prime positive integers and $\frac{b}{a}\in(-1,0)$. Also, assume $b\ne1$. Let $K$ and $K'$ be two Legendrian $(a,b)$ torus knots with max $\tb$ number and same rotation number $\rot$ in $(L(p,q),\xi_{ut})$. Here, $\xi_{ut}$ is determined by the relative Euler class $P.D.((-q',p')-(-1,1))$ of characteristic thickened torus with standard foliated boundary. Then, there is a contactomophism $\phi$ s.t. $\phi(K)=K'$.
\end{lemma}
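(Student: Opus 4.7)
The plan is to split $(L(p,q),\xi_{ut})$ along convex Heegaard tori and glue contactomorphisms piece by piece, following the template of the previous section. By maximality of $\tb$, realize $K$ (resp.\ $K'$) as a Legendrian divide of a convex Heegaard torus $T$ (resp.\ $T'$) with two dividing curves of slope $b/a = -s/t$. Since $b/a\in(-1,0)$, both $T$ and $T'$ sit inside the layer between the auxiliary Heegaard tori $T_{1,m+1}$ and $T_{1,m}$ of slopes $-1/(m+1)$ and $-1/m$, which by Proposition~\ref{prop.4.13} decomposes into $2m$ universally tight sub-layers indexed by a sign and an integer $k\in\{1,\dots,m\}$.

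First I would use the rotation-number computation from the previous theorem to match the layering data of $K$ and $K'$. Because $b\neq 1$, the theorem guarantees the $2m$ listed values of $\rot$ are pairwise distinct and in bijection with the choice of sign and $k$. The hypothesis $\rot K = \rot K'$ therefore forces $T$ and $T'$ to lie in \emph{corresponding} sub-layers, so that the relative Euler classes of $N$, of the layer from $T_1$ to $T_{1,k}$, of the layer from $T_{1,k}$ to $T$ (resp.\ $T'$), and of the layer from $T$ (resp.\ $T'$) to $T_{1,m+1}$ agree on the two sides.

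Next, after a small isotopy so that the ruling slopes of matching Heegaard tori agree, decompose $L(p,q)$ into five pieces separated by $T_1, T_{1,k}, T, T_{1,m+1}, T_2$: the two solid tori $V_1, V_2$ and three $T^2\times I$ layers. On each $T^2\times I$ piece the matching of boundary slopes and relative Euler classes, together with the classifications of min-twist and universally tight structures recalled in Proposition~\ref{prop.4.13}, yields a contactomorphism from the $K$-version to the $K'$-version; the two solid tori are handled by the classification of tight structures on solid tori. Arrange these contactomorphisms to agree on each common boundary torus — possible because convex tori of equal slope, twisting and standard foliation are contact-isotopic — and glue them into a global contactomorphism $\phi$ with $\phi(T)=T'$. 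Since $K$ and $K'$ are Legendrian divides of $T$ and $T'$, a final isotopy inside a neighborhood of $T'$ sends $\phi(K)$ onto $K'$.

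The main obstacle is the sub-layer containing $T$ and $T'$: within a fixed universally tight sub-layer, each of $T, T'$ subdivides it further, and one has to check that matching the relative Euler classes of these finer sub-layers (which is what $\rot K = \rot K'$ actually encodes) suffices to produce a contactomorphism of the ambient sub-layer sending $T$ to $T'$. This reduces to the uniqueness of convex tori of a prescribed slope inside a universally tight $T^2\times I$ with fixed boundary characteristic foliation, and it is the place where the forward-looking work needs to be spelled out most carefully.
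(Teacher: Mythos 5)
Your proposal is correct and follows essentially the same route as the paper: realize $K$ and $K'$ as Legendrian divides of convex Heegaard tori of slope $b/a$, use the distinctness of the $2m$ rotation-number values (valid since $b\neq 1$) to match the layering data on both sides, and then build the contactomorphism layer by layer from the classification of tight structures on the solid tori and the $T^2\times I$ pieces. The paper's own proof is only a two-sentence sketch of exactly this argument, so your more detailed write-up (including flagging the need to move $T$ to $T'$ inside the common universally tight sub-layer) is a faithful elaboration rather than a different method.
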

\begin{proof}
Assume $K$($K'$) lies on convex Heegaard torus $T$($T'$) with two divides as a Legendrian divide. $T_{1,m+1}$ and $T_{1,m+1}'$ are defined as above. Then, we can construct contactomorphism layer by layer and glue them together since the contact structure on these layers are 1-1 determined by the rotation number of $K$($K'$).
\end{proof}
\begin{lemma}
Assume $-a, b$ are co-prime positive integers and $\frac{b}{a}\in(-1,0)$. Also, assume $b\ne1$. Let $m$ and $e$ be the positive integers such that $|a|=bm+e$ where $0<e<b$. Let $K$ and $K'$ be Legendrian $(a,b)$ torus knots with max $\tb$. If the rotation numbers are $r$ and $r-2e$, then $S_e^+(K')$ and $S_e^-(K)$ are contactomorphism-equivalent to each. If the rotation numbers are $r$ and $r-2(b-e)$, then $S_{(b-e)}^+(K')$ and $S_{(b-e)}^-(K)$ are contactomorphism-equivalent to each.
\end{lemma}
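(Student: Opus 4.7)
The plan is to adapt the bypass-attachment argument of section 4.2 of \cite{EH} to our lens-space setting. By the rotation formulas of the previous subsection, the $2m$ possible values of $\rot$ for a max-$\tb$ representative can be listed in increasing order so that consecutive values differ alternately by $2e$ and $2(b-e)$; the lemma is thus the assertion that two consecutive peaks of the mountain range merge after $e$ (respectively $b-e$) stabilizations of opposite signs. First, place $K$ as a Legendrian divide on a convex Heegaard torus $T$ with two dividing curves of slope $b/a$, sitting in the specific universally tight layer (positive with parameter $k$, or negative with parameter $k'$) dictated by $\rot(K)$; do the same for $K'$ on $T'$. The hypothesis $\rot(K)-\rot(K')=2e$ then forces $T$ and $T'$ into the adjacent pair of layerings separated by a single bypass layer of the appropriate sign, and analogously for $2(b-e)$.

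Next, translate the stabilizations into bypass attachments on standard neighborhoods. A standard neighborhood $N(K)$ is a convex solid torus with boundary slope $b/a$ and two dividing curves; the $j$-fold negative stabilization $S_j^-(K)$ sits as the core of a thinner convex solid torus obtained by attaching $j$ consecutive negative bypasses whose attaching slopes are the vertices along a Farey-graph shortening path issuing from $b/a$, and similarly for $S_j^+(K')$ with positive bypasses. The integers $e$ and $b-e$ are precisely the lengths of the two elementary edges adjacent to $b/a$ on this Farey path, which matches the combinatorics of the preceding paragraph. After the attachments one obtains two standard neighborhoods $N_1\supset S_e^-(K)$ and $N_2\supset S_e^+(K')$ with identical boundary slopes, identical numbers of dividing curves, and, by Euler-class bookkeeping analogous to the $f_T(\mu_i)$ computation, identical relative Euler classes. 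Honda's classification of tight contact structures on a solid torus (section 4 of \cite{H}) then produces a contactomorphism $N_1\to N_2$ carrying $S_e^-(K)$ to $S_e^+(K')$; extending this across the complement uses the previous lemma in this subsection applied to the matching Legendrian rulings that appear on the common boundary torus. The $S_{b-e}^\pm$ case is the mirror of this argument, corresponding to traveling along the Farey path in the opposite direction from $b/a$.

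The main obstacle is the combinatorial bookkeeping: one must verify that the signs of the bypasses attached on each side — negative for $K$, positive for $K'$ — absorb exactly the $2e$ (or $2(b-e)$) difference in rotation number, so that the relative Euler classes of $N_1$ and $N_2$ genuinely coincide rather than merely having matching totals. This is the point where the sign-tracking in \cite{EH} is most delicate, and adapting it here requires explicit use of the positive-versus-negative layer formulas for $f_T(\mu_2)$ derived above, together with the Farey-graph decomposition of the universally tight layer given by proposition \ref{prop.4.13}.
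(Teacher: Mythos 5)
Your overall skeleton --- locating $K$ and $K'$ in layerings that agree except for the sign of a single basic slice, and absorbing the rotation-number gap $2e$ (respectively $2(b-e)$) into $e$ (respectively $b-e$) bypasses of opposite signs --- is consistent with the paper, and your identification of $e$ and $b-e$ with the intersection numbers of $(a,b)$ against the two neighbouring slopes $-\frac{1}{m}$ and $-\frac{1}{m+1}$ is correct. The genuine gap is in your last step. You construct a contactomorphism $N_1\to N_2$ between standard neighborhoods of $S_e^-(K)$ and $S_e^+(K')$ and then claim to extend it across the complement by invoking the previous lemma of the subsection. That lemma only produces an ambient contactomorphism for two \emph{maximal}-$\tb$ representatives with \emph{equal} rotation numbers; it says nothing about the stabilized knots (which are not maximal), nor about their complements, and the tight contact structures on the complement of a torus-knot neighborhood in $L(p,q)$ are nowhere classified in this paper. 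So the extension step --- which is the actual content of the lemma --- is unsupported as written.

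The paper avoids the knot complement entirely by keeping the Heegaard decomposition as the organizing structure. It realizes $S_e^+(K')$ as a Legendrian \emph{ruling curve} $\gamma$ of slope $\frac{b}{a}$ on the convex Heegaard torus $T_{1,m}'$ of slope $-\frac{1}{m}$: taking a convex annulus $A$ in the layer between $T_{1,m}'$ and $T'$ with $\partial A=\gamma-K'$, one computes that $\Gamma_A$ meets $\gamma$ in $2e$ points, misses $K'$ entirely, and satisfies $\chi(A^+)-\chi(A^-)=e$, so $\Gamma_A$ consists of exactly $e$ boundary-parallel arcs of one sign along $\gamma$, whence $\gamma=S_e^+(K')$; symmetrically $S_e^-(K)$ is a ruling curve on $T_{1,m}$. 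Because the layer between $T_1$ and $T_{1,m}$ carries the same relative Euler class $P.D.[(1+m-2k,0)]$ on both sides, and the remaining pieces ($V_1$, $V_2$, the characteristic thickened torus) match as well, the ambient contactomorphism is assembled layer by layer from the Heegaard decomposition and carries one ruling curve to the other. To repair your version you would need to replace the appeal to the previous lemma by exactly this layer-by-layer matching of Heegaard pieces (or else supply a classification of tight structures on the torus-knot complement); a solid-torus classification alone does not suffice.
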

\begin{proof}
Let $\rot(K')=\rot(K)-2e$. By the discussion above, one can assume $K'$ lies on a linear foliated convex Heegaard torus $T'$ with two divides of slope $\frac{b}{a}$. Let $T_1'$ be a convex Heegaard torus with two divides of slope $-1$ above $T'$; let $T_{1,m}'$ be a linear foliated convex Heegaard torus with two divides of slope $-\frac{1}{m}$ between $T'$ and $T_1'$. The layer bounded by $T_{1,m}'$ and $T'$ has the relative Euler class $P.D.[-(-a,-b)+(m,-1)]$. The relative Euler class of the layer bounded by $T_1'$ and $T_{1,m}'$ is $P.D.[(1+m-2k,0)]$ where $k=1,2,\dots,m$. After a small perturbation, one can assume $T_{1,m}'$ has ruling slope $\frac{b}{a}$. Let $A$ be a convex annulus properly embedded into this layer with $\partial A=\gamma - K$ where $\gamma$ is a Legendrian $(a,b)$ torus knot lying on $T_{1,m}'$ as a ruling curve. We have $$\chi(A^+)-\chi(A^-)=\langle P.D.[-(-a,-b)+(m+1,-1)],(a,b)\rangle=e$$ and $$\tw(\gamma,\Fr{A})=|\det\begin{pmatrix}
 m & a\\
 -1 & b\\
\end{pmatrix}|=e.$$
Thus, $\Gamma_A$ consists of $e$ boundary parallel arcs along $\gamma$. Therefore, $\gamma=S_e^+(K')$.

For knot $K$, define $T_1$, $T_{1,m}$ and $T$ similarly. By the same approach, the reader can find out that the Legendrian $(a,b)$ torus knot $\delta$ lying on $T_{1,m}$ as a ruling curve is exactly $\delta=S_e^-(K)$. And the relative Euler class of the layer bounded by $T_1$ and $T_{1,m}$ is also $P.D.[(1+m-2k,0)]$ which is the same as the relative Euler class of the layer bounded by $T_1'$ and $T_{1,m}'$. Thus one can construct a contactomorphism layer by layer, mapping $\gamma$ to $\delta$. i.e. There is a contactomorphism mapping $S_e^+(K')$ to $S_e^-(K)$.

The second case is left for reader, and it is not difficult once you notice the relation of $K$ and the Legendrian $(a,b)$ knot lying on $T_{1,m+1}$ as a ruling curve.
\end{proof}
\begin{lemma}
Assume $-a, b$ are co-prime positive integers and $\frac{b}{a}\in(-1,0)$. Let $K'$ be a Legendrian $(a,b)$ torus knot which does not reach max $\tb$ number in $(L(p,q),\xi_{ut})$. Here, $\xi_{ut}$ is determined by the relative Euler class $P.D.((-q',p')-(-1,1))$ of characteristic thickened torus with standard foliated boundary. Then $K'$ destabilizes.
\end{lemma}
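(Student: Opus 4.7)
The plan is to mimic verbatim the destabilization argument of the analogous lemma at the end of Section 3. The main idea is to realize $K'$ on a convex Heegaard torus, compare it with a Legendrian divide on a convex Heegaard torus of slope $\frac{b}{a}$, and read off a bypass from a convex annulus connecting the two.

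First I would put $K'$ on a convex Heegaard torus $T'$; this is possible since $K'$ is a $(a,b)$ torus knot and, by $\overline{\tw}(K_{(a,b)},\Fr T)=0$ together with the \textbf{tw}-versus-convexity lemma \ref{twnumber}, the Heegaard torus carrying $K'$ can be made convex. The hypothesis that $K'$ does not realize $\overline{\tb}$ translates, via the formula $\tb K=\tw(K,\Fr T)+ab+b^2q/p$, into $\tw(K',\Fr T')<0$.

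Next I would construct a second convex Heegaard torus $T$ with $\#\Gamma_T=2$ and slope exactly $s(T)=\frac{b}{a}$, chosen disjoint from $T'$ inside $L(p,q)$; $T$ carries a Legendrian divide $K$ of homotopy type $(a,b)$ with $\tw(K,\Fr T)=0$. Taking a convex annulus $A$ with Legendrian boundary $\partial A=K\cup(-K')$ properly embedded in the thickened torus bounded by $T$ and $T'$, the identities
\[
\tw(K,\Fr A)=\tw(K,\Fr T)=0,\qquad \tw(K',\Fr A)=\tw(K',\Fr T')<0
\]
force $\Gamma_A$ to intersect the $K'$-boundary but not the $K$-boundary. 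By the standard Imbalance Principle, $\Gamma_A$ must contain a boundary-parallel arc along $K'$; this arc provides a bypass for $K'$, whose attachment exhibits $K'$ as a stabilization of a Legendrian $(a,b)$ torus knot with strictly larger twisting. Hence $K'$ destabilizes.

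The main subtlety will be arranging the two Heegaard tori $T$ and $T'$ to be disjoint so that the region between them is a genuine thickened torus and the annulus $A$ is properly embedded in a tight $T^2\times I$; this requires the usual discretization/isotopy of convex tori inside a solid torus neighborhood, together with the fact that the slope $\frac{b}{a}\in(-1,0)$ does lie in the admissible range $(-\frac{p}{q},0)$ of Heegaard tori, so both $T$ and $T'$ sit inside the solid torus $V_1$ (after possibly isotoping $T'$ into the layer containing $T$). Once this positioning is done, tightness of the ambient contact structure bars overtwisted components of $\Gamma_A$ and the bypass conclusion is immediate, exactly as in the Section 3 proof.
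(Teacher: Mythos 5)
Your proposal is correct and is essentially the paper's own argument: the paper proves this lemma by simply invoking the Section 3 destabilization proof (convex torus $T$ of slope $\frac{b}{a}$ with Legendrian divide $K$, convex annulus $A$ between $T$ and $T'$, twisting imbalance $\tw(K,\Fr A)=0$ versus $\tw(K',\Fr A)<0$ forcing a boundary-parallel dividing arc, hence a bypass, along $K'$). Your additional remarks on arranging $T$ and $T'$ disjoint are sensible bookkeeping but do not change the argument.
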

\begin{proof}
the proof is totally the same with the case where $\frac{b}{a}\in[-\frac{p'}{q'}, -1]$.
\end{proof}

The reader can conclude \ref{thm1} in this case from above lemmas by totally the  same approach of theorem 4.13 in \cite{EH}. This is because $(a,b)$ torus knot type has the mountain range with the same shape as $(a,b)$ torus knot in $(S^3,\xi_{st})$. Therefore, the proof of main theorem \ref{thm1} makes no difference with theorem 4.13 of \cite{EH}.
\section{For the case of $\frac{b}{a}\in(-\frac{p}{q},-\frac{p'}{q'})$}
~~~~Notice that $\lambda_1-\mu_1$ is the most outward contact longitude of $V_1$ and $\lambda_2$ is the most outward contact longitude of $V_2$. If we exchange the position of $V_1$ and $V_2$ and assume they are equipped with new coordinate $(\mu_i', \lambda_i')$ where $i=1,2$, then $(\mu_i', \lambda_i')$ should satisfy 
$$
\begin{cases}
\mu_1=\mu_1',\\
\mu_2=\mu_2',\\
\lambda_1-\mu_1=\lambda_1',\\
\lambda_2=\lambda_2'-\mu_2'.
\end{cases}
$$
Together with
$$
\begin{cases}
\mu_1=p'\mu_2+p\lambda_2,\\
\lambda_1=q'\mu_2+q\lambda_2,
\end{cases}
$$
We derive that under new coordinate $(\mu_i', \lambda_i')$, the gluing map $\phi^{-1}:\partial V_1\rightarrow \partial V_2$ has standard matrix representation:
$$[\phi^{-1}]=\begin{pmatrix}
-(p-p')&(p-p')-(q-q')\\
p&-(p-q)
\end{pmatrix}.
$$
Especially, $\phi^{-1}(\mu_1')=-(p-p')\mu_2'+p\lambda_2'$. Therefore, $L(p,q)=L(p,p-p')$, and the universally tight contact structure on $L(p,p-p')$ induced from $(L(p,q),\xi_{ut})$ is determined by the relative Euler class of its characteristic thickened torus:$$P.D.\{(-[(p-p')-(q-q')],p-q)-(-1,1)\}.$$

We can also find out the transition matrix from coordinate $(\mu_1,\lambda_1)$ to $(\mu_2',\lambda_2')$, given by
$$
\begin{pmatrix}
-(p-p')&-(q-q')\\
p&q
\end{pmatrix}.$$
This concludes that an $(a,b)$ torus knot in $L(p,q)$ is actually an $(\tilde{a},\tilde{b})$ torus knot in $L(p,p-p')$ where $$(\tilde{a},\tilde{b})=(-(p-p')a-(q-q')b,~pa+qb).$$ 
Once $\frac{b}{a}\in(-\frac{p}{q},-\frac{p'}{q'})$, we have $\frac{\tilde{b}}{\tilde{a}}\in(-1,0)$. Applying the same argument on $(\tilde{a},\tilde{b})$ torus knot in $L(p,p-p')$, the main theorem \ref{thm1} is also holds when $\frac{b}{a}\in(-\frac{p}{q},-\frac{p'}{q'})$. The reader can check the correctness of this section as an easy exercise of linear algebra.
\section{For the case of $\frac{b}{a}\in(-\infty, -\frac{p}{q})$}
~~~~The first task is to compute $\overline{\tw}(K_{(a,b)},\Fr T)$. As before, we assume $T$ is a convex Heegaard torus with two divides of slope $-\frac{s}{t}$, and $K$ is a knot lying on $T$ as a Legendrian ruling curve of slope $\frac{b}{a}$. Here, $-a$, $b$ are positive co-prime integers and so do $s$ and $t$. 
Thus,
$${\tw}(K_{(a,b)},\Fr T)=-|\det\begin{pmatrix}
t&a\\
-s&b
\end{pmatrix}|=-\det\begin{pmatrix}
t&a\\
-s&b
\end{pmatrix}=-(as+b{t})$$,
and it suffice to find the min of $(as+bt)$ where $-\frac{s}{t}\in(-\frac{p}{q},0)$ and both $a$ and $b$ are fixed.

We take a quick review on Onaran's result:
\begin{lemma}[see theorem 4.2 of \cite{O}]
Assume $a$ and $b$ are relatively prime non-negative integers and so do $s$ and $t$. For fixed $a$ and $b$, let $-\frac{s}{t}$ range in $(-\frac{p}{q},0)$. If $a$ and $b$ are positive, then $|\det\begin{pmatrix}
t&a\\
-s&b
\end{pmatrix}|$ reaches its min if and only if $s=t=1$.
If $(a,b)=(1,0)$ or $(0,1)$, $\min|\det\begin{pmatrix}
t&a\\
-s&b
\end{pmatrix}|=1$.

\end{lemma}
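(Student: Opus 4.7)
The plan is a direct minimization over the admissible lattice points. Expand
\[
\left|\det\begin{pmatrix} t & a \\ -s & b \end{pmatrix}\right| \;=\; |bt + as|,
\]
and note that in all three sub-cases of the lemma the entries $a,b$ are non-negative while $s,t$ are positive integers, so the absolute value may be dropped and one is minimizing the linear functional $(s,t)\mapsto bt + as$ over coprime pairs with $s/t\in(0,p/q)$.

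For the generic case $a,b>0$, the inequality $bt+as \geq b\cdot 1 + a\cdot 1 = a+b$ is immediate from $s,t\geq 1$, and because both coefficients are strictly positive the inequality is strict unless $s=t=1$. One then only needs to verify that $(s,t)=(1,1)$ is itself admissible: clearly $\gcd(1,1)=1$, and the admissibility condition $s/t < p/q$ becomes $1<p/q$, which is nothing but the standing assumption $p>q\geq 1$. Hence the minimum equals $a+b$ and is realized uniquely at $s=t=1$.

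For the two degenerate sub-cases, substitute directly: $(a,b)=(1,0)$ reduces the expression to $s$, minimized at $s=1$ (paired with any coprime $t$, e.g.\ $t=1$), while $(a,b)=(0,1)$ reduces it to $t$, minimized at $t=1$. In both situations the minimum value is $1$, again using $p>q$ to certify that $(1,1)$ lies in the allowed range.

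I do not expect any genuine obstacle; the only item that must not be overlooked is the invocation of $p>q$ to make $(s,t)=(1,1)$ admissible. Were one forced to work with $s/t$ strictly bounded away from $1$ — for example inside a sub-interval of $(0,p/q)$ not containing $1$ — the argument would instead require extracting the fraction in the allowed interval with smallest denominator (equivalently, a best rational approximation from below via the continued fraction of $p/q$), and the clean characterization of the minimizer would disappear.
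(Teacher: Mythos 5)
Your argument is correct and complete. Note that the paper itself offers no proof of this lemma at all---it is imported verbatim as Theorem 4.2 of Onaran's paper \cite{O}---so there is nothing internal to compare against; your elementary minimization (drop the absolute value since $a,b\geq 0$ and $s,t\geq 1$ force $bt+as>0$, bound below by $a+b$ with strictness unless $s=t=1$, and certify admissibility of $(s,t)=(1,1)$ via $p>q$) is exactly the content of Onaran's statement and is a perfectly adequate self-contained proof. Your closing caveat is also well aimed: the genuinely harder situation you describe, where the slope is constrained to avoid an interval not containing $-1$, is precisely what the paper must confront for the case $\frac{b}{a}\in(-\infty,-\frac{p}{q})$, and it handles this not by a direct continued-fraction computation but by the corollary immediately following this lemma (an $SL_2(\mathbb{Z})$ change of basis reducing to the present statement) combined with the hop-sequence subdivision of the interval in Section 6. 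So the clean characterization does survive there, but only after the interval is cut into basic-slice-sized pieces.
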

As a simple application of it, we have the corollary below:
\begin{corollary}
Let $-\frac{u_1}{v_1}$, $-\frac{u_2}{v_2}$ be two rational numbers such that vector pair $(-v_1,u_1),(-v_2,u_2)$ forms a positive integral basis for $\mathbb{Z}^2$. i.e. $u_1v_2-u_2v_1=1 $. Here, $u_i$ and $v_i$ are non-negative integers for $i=1,2$. Assume $a$, $b$ are fixed co-prime integers. If $\frac{b}{a}\in(-\frac{u_1}{v_1},-\frac{u_2}{v_2})$. Then, when irreducible fraction $-\frac{s}{t}\in (\mathbb{Q}\cup\{\infty\})\backslash(-\frac{u_1}{v_1},-\frac{u_2}{v_2})$, $|\det\begin{pmatrix}
t&a\\
-s&b
\end{pmatrix}|$ reaches its min if and only if $-\frac{s}{t}=-\frac{u_2-u_1}{v_2-v_1}$. If $\frac{b}{a}=-\frac{u_i}{v_i}$, $|\det\begin{pmatrix}
t&a\\
-s&b
\end{pmatrix}|$ has minimal value $1$ and $-\frac{s}{t}=-\frac{u_2-u_1}{v_2-v_1}$ is a possible value whenever $|\det\begin{pmatrix}
t&a\\
-s&b
\end{pmatrix}|$ reaches its min. Here, it is allowed that these fractions to be $\infty$. 
\begin{proof}
It suffice to consider an orientation preserving linear map taking $(-v_1,u_1)\mapsto(1,0)$ and $(-v_2,u_2)\mapsto(0,1)$. Then use the lemma above.
\end{proof}
\end{corollary}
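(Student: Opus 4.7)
The plan is to reduce the corollary to the preceding lemma by an $SL(2,\mathbb{Z})$ change of coordinates. Since $(-v_1,u_1)$ and $(-v_2,u_2)$ form a positive integral basis of $\mathbb{Z}^2$ (equivalently, $u_1v_2-u_2v_1=1$), there is a unique matrix $M\in SL(2,\mathbb{Z})$ with $M(-v_1,u_1)^T=(1,0)^T$ and $M(-v_2,u_2)^T=(0,1)^T$. Since $\det M=1$, the map $M$ is orientation-preserving, preserves co-primality of integer pairs, and preserves $|\det(\cdot\mid\cdot)|$, so
$$\left|\det\begin{pmatrix} t & a \\ -s & b \end{pmatrix}\right|=\left|\det\bigl(M(t,-s)^T\mid M(a,b)^T\bigr)\right|.$$

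Next I would track the relevant slopes under $M$. The endpoints $-u_1/v_1$ and $-u_2/v_2$ are carried to the slopes $0$ and $\infty$, and by linearity
$$M\bigl(-(v_2-v_1),u_2-u_1\bigr)^T=(0,1)^T-(1,0)^T=(-1,1)^T,$$
so the slope $-(u_2-u_1)/(v_2-v_1)$ is sent to the slope $-1$. Because $M$ preserves cyclic orientation on $\mathbb{RP}^1$, the Farey arc from $-u_1/v_1$ to $-u_2/v_2$ containing the Farey mediant $-(u_1+u_2)/(v_1+v_2)$ is carried to the arc of positive slopes, while the complementary arc (containing $-(u_2-u_1)/(v_2-v_1)$) is carried to the arc of negative slopes. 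Writing $M(a,b)^T=(\tilde a,\tilde b)^T$ and $M(t,-s)^T=(\tilde t,-\tilde s)^T$, the hypothesis $b/a\in(-u_1/v_1,-u_2/v_2)$ then forces $\tilde a,\tilde b$ to be positive co-prime integers (after a common sign flip if necessary), and the hypothesis on $-s/t$ forces $\tilde s,\tilde t$ to be non-negative co-prime integers.

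At this point the preceding lemma (theorem 4.2 of \cite{O}) applies verbatim to the transformed problem: for fixed positive co-prime $\tilde a,\tilde b$, the quantity $\tilde t\tilde b+\tilde s\tilde a$ is minimized over positive co-prime pairs $(\tilde s,\tilde t)$ uniquely at $\tilde s=\tilde t=1$. Pulling back by $M^{-1}$ identifies this minimizer with $-s/t=-(u_2-u_1)/(v_2-v_1)$, which proves the first assertion. For the boundary case $b/a=-u_i/v_i$, the image $(\tilde a,\tilde b)$ collapses to $(1,0)$ or $(0,1)$, whereupon $|\det|$ becomes $\tilde s$ or $\tilde t$; its minimum value is $1$, and $(\tilde s,\tilde t)=(1,1)$ is among the minimizers, which pulls back to $-s/t=-(u_2-u_1)/(v_2-v_1)$ as required.

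The main obstacle I anticipate is the careful bookkeeping of signs and orientations when passing through $M$. One must verify that $M$ really sends the mediant-containing arc of $\mathbb{RP}^1$ to the positive-slopes arc (rather than the negative one), that the sign adjustments used to make $\tilde a,\tilde b,\tilde s,\tilde t$ non-negative preserve both $|\det|$ and co-primality, and that the range of transformed divide slopes matches the range over which the Onaran lemma applies (in particular, handling the endpoints $0$ and $\infty$ correctly). Once these conventions are pinned down, the reduction itself is immediate from the preceding lemma.
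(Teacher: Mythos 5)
Your proposal is correct and is essentially the paper's own argument: the paper's proof is precisely the two-sentence sketch ``apply the orientation-preserving linear map sending $(-v_1,u_1)\mapsto(1,0)$, $(-v_2,u_2)\mapsto(0,1)$, then use the lemma above,'' and you have simply filled in the determinant-invariance, slope-tracking, and sign bookkeeping that this reduction requires. No discrepancy to report.
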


\begin{remark}
These fractions in the corollary above can be $\infty$. Especially, if $-\frac{u_1}{v_1}=\infty$, then $-\frac{u_1}{v_1}$ should be regard as $-\infty=-\frac{1}{0}$. 
\end{remark}
For convenient, we define several conceptions below. The reader should take care that they are not official names.
\begin{definition}
Let $s_0$ and $s_1$ be two rational slopes (including $\infty$) marked on the boundary of Tessellation disc. If the related direction of these two slopes forms an integral basis, then there is an \textbf{connecting arc} connecting $s_0$ and $s_1$ in Tessellation disc. 
\end{definition}
These connecting arcs are properly embedded into this disc and are actually geodesic curves of this hyperbolic disc with Poincare metric. More, all these connecting arcs are disjoint in the interior of this disc. In figure \ref{Tess disc}, the connecting arcs is the arcs inside of the disc. 
\begin{figure}[H]
    \centering
    \includegraphics[scale=1]{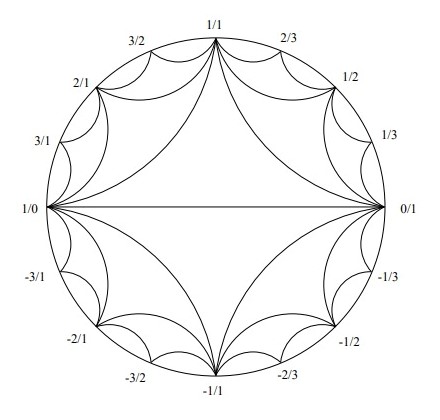}
    \caption{Tessellation disk. This picture is stolen from \cite{H}
}\label{Tess disc}
\end{figure}

In order to apply the corollary above, we divide interval $(-\infty, -\frac{p}{q})$ in to finite segments. 

\begin{definition}
A \textbf{hop sequence} is a sequence of slopes $s_0,s_1,\dots,s_n$ going counterclockwise on the boundary of Tessellation disc such that there is a connecting arc connecting $s_{i-1}$ to $s_i$ for each $i=1,2,\dots,n$. As in lemma 4.12 of \cite{H}, such sequence is called a \textit{ \textbf{hop sequence}} from $s_0$ to $s_n$.
\end{definition}Let $-\infty=-\frac{p_0}{q_0}<-\frac{p_1}{q_1}<\dots<-\frac{p_n}{q_n}=-\frac{p}{q}$ be a hop sequence such that for each $i=0,1,\dots,n-1$, integral vectors $(-q_i,p_i)$ and $(-q_{i+1},p_{i+1})$ form an integral basis for $\mathbb{Z}^2$. Also, for each $i=0,1,\dots,n$, assume $q_i$ and $p_i$ are co-prime non-negative integers.

Recall the conclusion in section 4.4.4 of \cite{H}, the continued fraction of $-\frac{p}{q}$ induce the shortest hop sequence from $-\frac{p}{q}$ to $-1$. In order to find a hop sequence from $-\infty$ to $-\frac{p}{q}$, it suffice to find an isomorphism $\Phi$ of Tessellation disc, mapping boundary arc $[-\infty, -1]$ to $[-\infty, -1]$ and reverse the orientation, Since we can induce this hop sequence from the continued fraction of $\Phi(-\frac{p}{q})$. Fortunately, such $\Phi$ exists.

Consider linear map $\mathcal{A}: x\in\mathbb{R}^{2\times1}\mapsto Ax$ where $A=\begin{pmatrix}
1&1\\
0&-1
\end{pmatrix}$. By regarding the boundary of Tessellation disc as $\mathbb{R}P^1$, $\mathcal{A}$ gives an isomorphism of Tessellation disc, denoted by $\tilde{\mathcal{A}}$. The reader can check that
$$\begin{cases}
\tilde{\mathcal{A}}(-1)=\infty,\\ \tilde{\mathcal{A}}(\infty)=-1,\\
\tilde{\mathcal{A}}([-\infty, -1])=[-\infty, -1],\\
\tilde{\mathcal{A}}(-\frac{p}{q})=-\frac{p}{p-q}.
\end{cases}$$
Especially, the first identity means that $\mathcal{A}$ maps a straight line of slope $-1$ to a line with slope $\infty$. The reader can also check that the inverse of $\mathcal{A}$ is exactly itself. Thus, the inverse of $\tilde{\mathcal{A}}$ is itself too.

Assume $-\frac{p}{p-q}$ has the following continued fraction expansion:
\begin{equation}
-\frac{p}{p-q}=
 r_1 - \cfrac{1}{r_2
          - \cfrac{1}{r_3
          -\dots  \cfrac{1}{r_m} } } 
=:\cf(r_1, r_2, \dots, r_m)
\end{equation} with all $r_i\in \mathbb{Z}_{\leq-2}$. Then, the shortest hop sequence from $-\frac{p}{p-q}$ to $-1$ can be obtained by decreasing the last entry of the continued fraction. After mapping this sequence by $\tilde{\mathcal{A}}$, we get the shortest hop sequence from $-\infty$ to $-\frac{p}{q}$.
\begin{example}
Let $(p,q)=(8,5)$, then $\tilde{\mathcal{A}}(-\frac{8}{5})=-\frac{8}{8-5}=-\frac{8}{3}=-3-\frac{1}{-3}$. The shortest hop sequence from $-\frac{8}{3}$ to $-1$ is $-3-\frac{1}{-3}$, $-3-\frac{1}{-2}$, $-3-\frac{1}{-1}$, $-1$, i.e. $-\frac{8}{3}<-\frac{5}{2}<-2<-1$. Mapping this sequence by $\tilde{\mathcal{A}}$, we get $-\frac{8}{8-3}>-\frac{5}{5-2}>-\frac{2}{2-1}>-\frac{1}{1-1}$. We have $-\infty<-2<-\frac{5}{3}<-\frac{8}{5}$ is the shortest hop sequence from $-\infty$ to $-\frac{8}{5}$.
\end{example}

Let $-\infty=-\frac{p_0}{q_0}<-\frac{p_1}{q_1}<\dots<-\frac{p_n}{q_n}=-\frac{p}{q}$ be the shortest hop sequence from $-\infty$ to $-\frac{p}{q}$ described as above. It remains to check that $-\frac{p_i-p_{i-1}}{q_i-q_{i-1}}\in(-\frac{p}{q},0)$ holds for each $i=1,2,\dots,n$. Notice that the continued fraction expansion has a matrix representation, its corresponding integral vector is described as below:
$$
\tilde{\mathcal{A}}(-\frac{p}{q})=-\frac{p}{p-q}=\cf(r_1,r_2,\dots,r_m) \sim \begin{pmatrix}
0&1\\
-1&r_1
\end{pmatrix}
\begin{pmatrix}
0&1\\
-1&r_2
\end{pmatrix}
\dots
\begin{pmatrix}
0&1\\
-1&r_{m-1}
\end{pmatrix}
\begin{pmatrix}
1\\
r_m
\end{pmatrix}.$$
This equation means that this integral vector has slope $\cf(r_1,r_2,\dots,r_m)$. Thus, the corresponding integral vector of $$\tilde{\mathcal{A}}(-\frac{p_i-p_{i-1}}{q_i-q_{i-1}})=-\frac{p_i-p_{i-1}}{(p_i-p_{i-1})-(q_i-q_{i-1})}=-\frac{p_i-p_{i-1}}{(p_i-q_i)-(p_{i-1}-q_{i-1})}$$ is of form:
$$
\begin{pmatrix}
0\\
1
\end{pmatrix} \text{or} \begin{pmatrix}
0&1\\
-1&r_1
\end{pmatrix}
\begin{pmatrix}
0&1\\
-1&r_2
\end{pmatrix}
\dots
\begin{pmatrix}
0&1\\
-1&r_k
\end{pmatrix}
\begin{pmatrix}
0\\
1
\end{pmatrix}, k=1,2,\dots,m-1.
$$
Therefore, $\tilde{\mathcal{A}}(-\frac{p_i-p_{i-1}}{q_i-q_{i-1}})$ ranges in
$$ -\infty, \cf(r_1), \cf(r_1,r_2), \dots, \cf(r_1,r_2,\dots,r_{m-1})
.$$
The readers should notice that $\tilde{\mathcal{A}}(-\frac{p_i-p_{i-1}}{q_i-q_{i-1}})$ doesn't take each value in above if $r_i=-2$ for some $i$.  
Also, by induction of $m$, it is easy to check that
$$ -\infty<\cf(r_1)<\cf(r_1,r_2)<\dots< \cf(r_1,r_2,\dots,r_{m-1})<\cf(r_1,r_2,\dots,r_m)
.$$
Hence, 
$-\frac{p_i-p_{i-1}}{q_i-q_{i-1}}$ ranges in
$$ -1=\tilde{\mathcal{A}}(-\infty), \tilde{\mathcal{A}}(\cf(r_1)), \tilde{\mathcal{A}}(\cf(r_1,r_2)), \dots, \tilde{\mathcal{A}}(\cf(r_1,r_2,\dots,r_{m-1}))
$$
and there holds:
$$ -1=\tilde{\mathcal{A}}(-\infty)> \tilde{\mathcal{A}}(\cf(r_1))> \tilde{\mathcal{A}}(\cf(r_1,r_2))> \dots> \tilde{\mathcal{A}}(\cf(r_1,r_2,\dots,r_{m-1}))>
\tilde{\mathcal{A}}(\cf(r_1,r_2,\dots,r_m))=-\frac{p}{q}
$$
i.e. we have proved that $-\frac{p_i-p_{i-1}}{q_i-q_{i-1}}\in(-\frac{p}{q},-1]$ holds for each $i=1,2,\dots,n$.

More precisely, it holds that $-\frac{p_i-p_{i-1}}{q_i-q_{i-1}}\in[-\frac{p'}{q'},-1]$ for each $i=1,2,\dots,n$. For convenient, slopes $-\frac{p_i-p_{i-1}}{q_i-q_{i-1}}$ ($i=1,2,\dots,n$) is called the \textit{\textbf{delta slopes}} for hop sequence $-\infty=-\frac{p_0}{q_0}<-\frac{p_1}{q_1}<\dots<-\frac{p_n}{q_n}=-\frac{p}{q}$. The reader can check that $0, -\infty=-\frac{p_0}{q_0},-\frac{p_1}{q_1},\dots,-\frac{p_n}{q_n}=-\frac{p}{q}$ is the shortest hop sequence from $0$ to $-\frac{p}{q}$. This means that every step of this sequence is going counterclockwise and the furthest in Tessellation disc. Also, $-1$ together with $-\frac{p_i-p_{i-1}}{q_i-q_{i-1}}, i=1,2,\dots, n$ is the delta slopes of this hop sequence. By considering these directions of delta slopes in new coordinate $(\mu_2',\lambda_2')$, these slopes is still in $(-\frac{p}{p-p'},-1]$ since we can do the same analyse in $L(p,p-p')$ and the induced map of $GL_2(\mathbb{Z})$ preserves the delta slopes of a hop sequence. Therefore, by mapping these directions back to coordinate $(\mu_1,\lambda_1)$, we have $-\frac{p_i-p_{i-1}}{q_i-q_{i-1}}\in[-\frac{p'}{q'},-1]$, for $i=1,2,\dots,n$.

From above discussions, we have:
\begin{theorem}
Let $-\infty=-\frac{p_0}{q_0}<-\frac{p_1}{q_1}<\dots<-\frac{p_n}{q_n}=-\frac{p}{q}$ be the shortest hop sequence such that for each $i=0,1,\dots,n$, $q_i$ and $p_i$ are co-prime non-negative integers. Let $\frac{b}{a}\in(-\infty, -\frac{p}{q})$ where $-a$ and $b$ are fixed co-prime non-negative integers. If $\frac{b}{a}\in(-\frac{p_{i-1}}{q_{i-1}}, -\frac{p_i}{q_i})$ for some $i=1,2,\dots,n$, then $$\overline{\tw}(K_{(a,b)},\Fr T)=-|\det\begin{pmatrix}
a&-(q_i-q_{i-1})\\
b&p_i-p_{i-1}
\end{pmatrix}|$$ and the only possible value for $s(T)$ is $-\frac{p_i-p_{i-1}}{q_i-q_{i-1}}$ whenever ${\tw}(K_{(a,b)},\Fr T)$ reaches its max.
If $\frac{b}{a}=-\frac{p_i}{q_i}$ for some $i=1,2,\dots,n-1$, then $\overline{\tw}(K_{(a,b)},\Fr T)=-1$, both $-\frac{p_i-p_{i-1}}{q_i-q_{i-1}}$ and $-\frac{p_{i+1}-p_i}{q_{i+1}-q_i}$ are possible values for $s(T)$ whenever ${\tw}(K_{(a,b)},\Fr T)$ reaches its max.
\end{theorem}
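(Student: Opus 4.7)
The plan is to reduce the statement to a purely lattice-theoretic optimization on the Farey (Tessellation) graph via the corollary established just above. For a Legendrian ruling curve on a convex Heegaard torus with two dividing curves of slope $-s/t\in(-\tfrac{p}{q},0)$ (with $s,t$ positive coprime integers), the excerpt already gives
\[
\tw(K,\Fr T)=-\left|\det\begin{pmatrix}a&t\\b&-s\end{pmatrix}\right|=-|as+bt|,
\]
and the paragraph after lemma 4.10 of \cite{EH} was invoked to reduce to the two-divides case. Thus computing $\overline{\tw}(K_{(a,b)},\Fr T)$ is exactly minimizing $|as+bt|$ over admissible slopes $-s/t\in(-\tfrac{p}{q},0)$.

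For the generic case $\tfrac{b}{a}\in(-\tfrac{p_{i-1}}{q_{i-1}},-\tfrac{p_i}{q_i})$, the hop sequence construction guarantees that $(-q_{i-1},p_{i-1})$ and $(-q_i,p_i)$ form a positive integral basis of $\mathbb{Z}^2$, so the corollary applies with $(-u_1/v_1,-u_2/v_2)=(-p_{i-1}/q_{i-1},-p_i/q_i)$. The admissible range $(-\tfrac{p}{q},0)$ is disjoint from this open interval because the whole hop sequence sits in $(-\infty,-\tfrac{p}{q}]$; hence the corollary identifies the minimizer uniquely as the delta slope $-\tfrac{p_i-p_{i-1}}{q_i-q_{i-1}}$. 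The fact established immediately before the theorem that every delta slope lies in $[-\tfrac{p'}{q'},-1]\subset(-\tfrac{p}{q},0)$ certifies that this slope is actually realized by a convex Heegaard torus, and evaluating $|as+bt|$ at this slope recovers the stated formula
$\overline{\tw}=-\bigl|\det\bigl(\begin{smallmatrix}a&-(q_i-q_{i-1})\\b&p_i-p_{i-1}\end{smallmatrix}\bigr)\bigr|$. Uniqueness of the minimizer in the corollary transfers verbatim to our constrained problem.

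For the boundary case $\tfrac{b}{a}=-\tfrac{p_i}{q_i}$ with $1\le i\le n-1$, the vector $(a,b)$ is proportional to $(-q_i,p_i)$, which is a member of both adjacent bases $\{(-q_{i-1},p_{i-1}),(-q_i,p_i)\}$ and $\{(-q_i,p_i),(-q_{i+1},p_{i+1})\}$. Expanding any primitive $(-t,s)$ in either basis, the determinant $|as+bt|$ equals the absolute value of the coefficient of the other basis vector, which is a non-negative integer; it vanishes only when $-s/t=-p_i/q_i$, a slope excluded from $(-\tfrac{p}{q},0)$. Hence the minimum is $1$, so $\overline{\tw}=-1$. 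The two coefficient-$\pm1$ choices coming from the two bases yield precisely the two delta slopes $-\tfrac{p_i-p_{i-1}}{q_i-q_{i-1}}$ and $-\tfrac{p_{i+1}-p_i}{q_{i+1}-q_i}$, both of which lie in $(-\tfrac{p}{q},0)$ by the delta slope discussion, and both therefore realize the maximum twisting.

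The principal obstacle is the correct application of the corollary in the admissible range: one must verify that $(-\tfrac{p}{q},0)$ is wholly outside each hop interval $(-p_{i-1}/q_{i-1},-p_i/q_i)$, which is immediate from the monotonicity of the hop sequence, and that the delta slope is a permissible Heegaard slope, which is precisely the delta-slope containment established at the end of the preceding discussion. Once both are granted, the theorem follows from a direct case split together with the uniqueness clause of the corollary.
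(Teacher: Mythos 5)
Your proposal is correct and follows essentially the same route as the paper: reduce to minimizing $|as+bt|$ over admissible dividing slopes $-s/t\in(-\tfrac{p}{q},0)$ with $\#\Gamma_T=2$, apply the corollary to the hop interval containing $\tfrac{b}{a}$, and use the previously established containment of the delta slopes in $[-\tfrac{p'}{q'},-1]\subset(-\tfrac{p}{q},0)$ to certify that the optimal slope is actually realized by a convex Heegaard torus. The only cosmetic difference is that in the boundary case $\tfrac{b}{a}=-\tfrac{p_i}{q_i}$ you derive the minimum value $1$ directly from the basis expansion of $(-t,s)$ rather than quoting the second clause of the corollary, which is if anything slightly cleaner since it visibly excludes the degenerate value $0$ at the inadmissible slope $-\tfrac{p_i}{q_i}$ itself.
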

\begin{remark}
If $\frac{b}{a}=-\frac{p_i}{q_i}$ for some $i=1,2,\dots,n-1$, and $\tw(K_{(a,b)},\Fr T)$ reaches its maximum $\overline{\tw}(K_{(a,b)},\Fr T)=-1$, both $-\frac{p_i-p_{i-1}}{q_i-q_{i-1}}$ and $-\frac{p_{i+1}-p_i}{q_{i+1}-q_i}$ are possible values for $s(T)$, but maybe not all possible values for $s(T)$. See the proof of lemma \ref{relation of those with max tw} below.
\end{remark}
As before, the main theorem in this case can be easily concluded from the following two lemmas.
\begin{lemma}
Assume $-a, b$ are co-prime positive integers and $\frac{b}{a}\in(-\infty, -\frac{p}{q})$. Let $K'$ be a Legendrian $(a,b)$ torus knot which does not reach max $\tb$ number in $(L(p,q),\xi_{ut})$. Here, $\xi_{ut}$ is determined by the relative Euler class $P.D.((-q',p')-(-1,1))$ of characteristic thickened torus with standard foliated boundary. Then $K'$ is a stabilization of some Legendrian $(a,b)$ torus knot, thus it can be regard as several stabilizations of one with max $\tb$ number.
\begin{proof}
the proof is totally similar with the cases above. The only difference is the twisting number of $K$ and $K$ relative to framing of $T$.
\end{proof}
\end{lemma}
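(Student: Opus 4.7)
The plan is to adapt the convex annulus argument used in the earlier case $\frac{b}{a}\in[-\frac{p'}{q'},-1]$; the only essential change is that the maximal twisting $\overline{\tw}(K_{(a,b)},\Fr T)$ from the preceding theorem is now strictly negative rather than zero. First I would place $K'$ on a convex Heegaard torus $T'$ and, independently, produce a Legendrian $(a,b)$ torus knot $K$ achieving $\tw(K,\Fr T)=\overline{\tw}(K_{(a,b)},\Fr T)$ as a Legendrian ruling curve on a convex Heegaard torus $T$ whose slope is one of the delta slopes $-\frac{p_i-p_{i-1}}{q_i-q_{i-1}}$ furnished by the shortest hop sequence; after a small isotopy I would make $T$ and $T'$ disjoint.

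Next I would take a convex annulus $A$ with Legendrian boundary $\partial A=K-K'$ properly embedded in the thickened torus region between $T$ and $T'$. By \ref{twnumber},
$$\#(K\cap\Gamma_A)=-2\tw(K,\Fr T)=-2\,\overline{\tw}(K_{(a,b)},\Fr T)<-2\tw(K',\Fr T')=\#(K'\cap\Gamma_A),$$
the strict inequality being precisely the hypothesis that $K'$ does not attain max $\tb$. Tightness of the ambient contact structure together with the Legendrian realization principle rules out closed components of $\Gamma_A$, so $\Gamma_A$ consists of arcs only. Since $\#(K'\cap\Gamma_A)>\#(K\cap\Gamma_A)$, a pigeonhole count on arc endpoints forces at least one boundary-parallel dividing arc with both endpoints on $K'$; an outermost such arc cuts off a bypass half-disc along $K'$, exhibiting $K'$ as a stabilization of a Legendrian $(a,b)$ torus knot $K''$ with $\tb(K'')=\tb(K')+1$. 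Iterating the procedure on $K''$ yields in finitely many steps a max-$\tb$ representative of which $K'$ is an iterated stabilization.

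The main obstacle I expect is justifying the convex realization of $A$ together with the claimed behavior of its dividing set. The thickened region between $T$ and $T'$ need not be a single universally tight $T^2\times I$ piece --- it may cross the characteristic thickened torus or lie entirely outside it --- so I cannot directly invoke a classification of contact structures on $T^2\times I$. Nevertheless, both boundary components of $A$ are Legendrian with nonpositive twisting relative to $\Fr A$, so perturbing $A$ to a convex surface with fixed Legendrian boundary is guaranteed by \ref{twnumber}, and tightness of the ambient contact structure is enough to preclude closed dividing curves on $A$ (a closed dividing component would produce an overtwisted disc after capping off, by Giroux's criterion). Once closed components are excluded, the pigeonhole argument above proceeds exactly as in the earlier cases.
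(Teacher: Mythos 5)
Your proposal is correct and follows essentially the same route as the paper: the paper's proof simply defers to the convex-annulus argument of the $\frac{b}{a}\in[-\frac{p'}{q'},-1]$ case, noting that the only change is that $\tw(K,\Fr T)=\overline{\tw}(K_{(a,b)},\Fr T)$ is now negative rather than zero, and your imbalance/pigeonhole count of $\Gamma_A$-endpoints is exactly the detail that adaptation requires. (Minor quibble: an essential closed component of $\Gamma_A$ is not excluded by Giroux's criterion on an annulus, but its possible presence does not affect the endpoint count that forces a boundary-parallel arc along $K'$.)
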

\begin{lemma}\label{relation of those with max tw}
Assume $-a, b$ are co-prime positive integers and $\frac{b}{a}\in(-\infty, -\frac{p}{q})$. Let $K$ and $K'$ be two Legendrian $(a,b)$ torus knots with max $\tb$ number in $(L(p,q),\xi_{ut})$. Here, $\xi_{ut}$ is determined by the relative Euler class $P.D.((-q',p')-(-1,1))$ of characteristic thickened torus with standard foliated boundary. Then, there is a contactomophism $\phi$, s.t. $\phi(K)=K'$. As a corollary, the mountain range of Legendrian ${(a,b)}$ torus knot has only one top. The rotation number of the Legendrian $(a,b)$ torus knots with max $\tb$ number is determined by formula \ref{rot}, the corresponding data can be found in section \ref{1stcomputation}.
\begin{proof}
Let $-\infty=-\frac{p_0}{q_0}<-\frac{p_1}{q_1}<\dots<-\frac{p_n}{q_n}=-\frac{p}{q}$ be the shortest hop sequence. For each $i=0,1,\dots,n$, let $q_i$ and $p_i$ be co-prime non-negative integers.

If $\frac{b}{a}\in(-\frac{p_{i-1}}{q_{i-1}}, -\frac{p_i}{q_i})$ for some $i=1,2,\dots,n$, then, after a small perturbation, we can assume $K$($K'$) is a Legendrian ruling curve on a convex Heegaard torus $T$($T'$) with two divides of slope $s(T)=-\frac{p_i-p_{i-1}}{q_i-q_{i-1}}$. Noticing that $s(T)=s(T')\in[-\frac{p'}{q'},-1]$, thus $T$ is in a characteristic thickened torus. Thus, by constructing contactomorphism layer by layer as before, we can reach the conclusion.

The difficult case is $\frac{b}{a}=-\frac{p_i}{q_i}$ for some $i=1,2,\dots,n-1$. Consider continued fraction of $-\frac{p}{p-q}=\tilde{\mathcal{A}}(-\frac{p}{q})=\cf(r_1,r_2,\dots,r_m)$ and the hop sequence from $-\frac{p}{p-q}$ to $-1$ induced by adding $+1$ on the last position of continued fraction. If $r_k\ne-2$, then $\cf(r_1,\dots,r_{k-1},r_k+1)$, $\cf(r_1,\dots,r_{k-1},r_k+2)$ are appeared in this hop sequence with its delta slope $\cf(r_1,\dots,r_{k-1})$. Each two of the corresponding integral vector of these three slopes forms an integral basis for $\mathbb{Z}^2$. In another word, in Tessellation disc, there is a connecting arc connecting each two of these three slopes. If $r_k\ne-2$, $r_{k+1}=r_{k+2}=\dots=r_l=-2$ and $r_{l+1}\ne-2$ where $l\geq k+1$, then $\cf(r_1,\dots,r_l,-2)$, $\cf(r_1,\dots,r_l,-1)=\cf(r_1,\dots,r_k,-2,\dots,-2,-1)=\dots=\cf(r_1,\dots,r_k,-1)=\cf(r_1,\dots,r_k+1)$, $\cf(r_1,\dots,r_k+2)$ are appeared in the hop sequence from $-\frac{p}{p-q}$ to $-1$ with its delta slopes $\cf(r_1,\dots,r_l)$ and $\cf(r_1,\dots,r_{k-1})$. By writing these slopes in form of corresponding integral vector, the reader can check the following facts in Tessellation disc:
\begin{itemize}
    \item There is a connecting arc connecting $\cf(r_1,\dots,r_l,-2)$ to $\cf(r_1,\dots,r_l)$.
    \item There is a connecting arc connecting $\cf(r_1,\dots,r_k+2)$ to $\cf(r_1,\dots,r_{k-1})$.
    \item There is a connecting arc connecting $\cf(r_1,\dots,r_l,-1)=\cf(r_1,\dots,r_k,-2,\dots,-2,-1)=\dots=\cf(r_1,\dots,r_k,-1)=\cf(r_1,\dots,r_k+1)$ to each of $\cf(r_1,\dots,r_{k-1})$, $\cf(r_1,\dots,r_k)$, $\dots$, $\cf(r_1,\dots,r_l)$.
    \item $\cf(r_1,\dots,r_{k-1})$, $\cf(r_1,\dots,r_k)$, $\dots$, $\cf(r_1,\dots,r_l)$ is a hop sequence.
\end{itemize}
In other cases, Tessellation disc at $-\frac{p_i}{q_i}$ is also looked like below. The only difference is the number of these point $D_i$, but there is at least one $D_i$. The check of its correctness is left for reader.
Mapping slopes above by $\tilde{\mathcal{A}}$, we have the figure below:
\begin{figure}[H]
    \centering
    \includegraphics[scale=0.5]{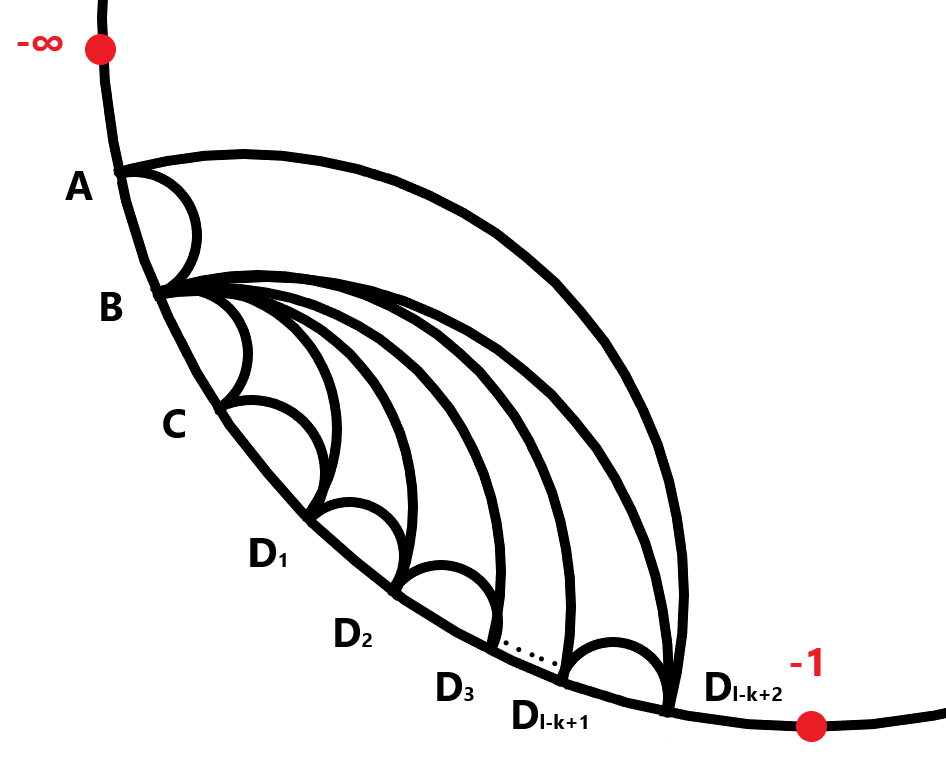}
     \caption{the slopes in Tessellation disk.}
\end{figure}
where:
\begin{itemize}
    \item point $A=-\frac{p_{i-1}}{q_{i-1}}=\tilde{\mathcal{A}}(\cf(r_1,\dots,r_k+2))$,
    \item point $B=-\frac{p_i}{q_i}=\tilde{\mathcal{A}}(\cf(r_1,\dots,r_k+1))$,
    \item point $C=-\frac{p_{i+1}}{q_{i+1}}=\tilde{\mathcal{A}}(\cf(r_1,\dots,r_l,-2))$,
    \item point $D_1=-\frac{p_{i+1}-p_i}{q_{i+1}-q_i}$,
    \item point $D_{l-k+2}=-\frac{p_i-p_{i-1}}{q_i-q_{i-1}}$,
    \item points $D_{l-k+2},D_{l-k+1},\dots,D_1$ are actually $\tilde{\mathcal{A}}(\cf(r_1,\dots,r_{k-1})), \tilde{\mathcal{A}}(\cf(r_1,\dots,r_k)), \dots, \tilde{\mathcal{A}}(\cf(r_1,\dots,r_l))$.
\end{itemize}

First of all, whenever $K$ reaches its max twist number (equivalently, $\tb$ number) $\overline{\tw}(K_{(a,b)},\Fr T)=-1$, the values of $s(T)$ are exactly these $D_i$ where $i=1,2,\dots,l-k+2$. Arguing by contradiction, assume there is another convex Heegaard torus $T$ of slope $s(T)$ such that $K$ is lying on it with ${\tw}(K,\Fr T)=-1$. Then, represent $s(T)\in(-\frac{p}{q},0)$ on Tessellation disc by point $D$ and there is a connecting arc $\stackrel\frown{BD}$ from point $B$ to point $D$. Observing Tessellation disc, this arc $\stackrel\frown{BD}$ is impossible to disjoint from the union of connecting arcs $\stackrel\frown{CD}_1$, $\stackrel\frown{D_iD}_{i+1}$ (where $i=1,2,\dots,l-k+1$), $\stackrel\frown{AD}_{l-k+2}$ in the interior of Tessellation disc. This reaches a contradiction since the connecting arcs of slopes are disjoint in the interior of Tessellation disc.

Now assume $K$ is a Legendrian ruling curve on convex Heegaard torus $T$ of two divides with slope $s(T)=D_i$. Correspondingly, assume $K'$ is a Legendrian ruling curve on convex Heegaard torus $T'$ disjoint from $T$ of two divides with slope $s(T')=D_{i+1}$ ($i=1,2,\dots,l-k+1$). Noticing that there is a connecting arc from $D_i$ to $D_{i+1}$. Thus the thickened torus $M$ bounded by $T$ and $T'$ is actually a basic slice. Let $\Sigma$ be a convex annulus properly embedded into this slice with its boundary $K\cup K'$. We have $$\Gamma_\Sigma\cap K=-2\tw(K,\Fr{\Sigma})=-2\tw(K,\Fr{T})=2$$ and $$\Gamma_\Sigma\cap K'=-2\tw(K',\Fr{\Sigma})=-2\tw(K',\Fr{T'})=2.$$ Thus, the dividing set $\Gamma_\Sigma$ of $\Sigma$ is exactly two arcs connecting $K$ to $K'$. See the figure below. After a perturbation fixing boundary, the characteristic foliation of $\Sigma$ can be made linear as in the figure below, and the direction of characteristic foliation is given by the arrow.
\begin{figure}[H]
    \centering
    \includegraphics[scale=1]{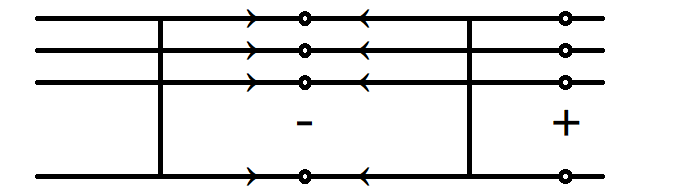}
    \caption{The top horizontal boundary line is $K$ and the bottom is $K'$.}
\end{figure}
The Legendrian isotopy from $K$ to $K'$ is given by the characteristic foliation. Thus, we can always assume $s(T)=D_1\in[-\frac{p'}{q'},-1]$ for $K$, and the left work is still construct contactomorphism layer by layer for two Legendrian $(a,b)$ knots with max $\tb$ number.
\end{proof}
\end{lemma}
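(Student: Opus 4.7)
The plan is to reduce the general case $b/a \in (-\infty, -p/q)$ to the already-handled case $b/a \in [-p'/q', -1]$ by exploiting the delta slopes of the shortest hop sequence. Concretely, the strategy has two regimes depending on whether $b/a$ is a ``generic'' or a ``boundary'' value of the hop sequence, and then in each regime one calls back on the layer-by-layer contactomorphism construction from section \ref{1stcomputation}.

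First I would treat the generic case $b/a \in (-p_{i-1}/q_{i-1}, -p_i/q_i)$. By the preceding theorem the convex Heegaard torus $T$ carrying $K$ as a ruling with $\tw(K,\Fr T)=\overline{\tw}(K_{(a,b)},\Fr T)$ is forced to have the \emph{unique} slope $-\frac{p_i-p_{i-1}}{q_i-q_{i-1}}$, and we have already shown that this slope lies in $[-p'/q', -1]$. Hence $T$ sits inside the characteristic thickened torus of $(L(p,q),\xi_{ut})$, and the same layer-by-layer contactomorphism construction used in the case $b/a \in [-p'/q',-1]$ applies verbatim: split along $T_1, T, T_2$, use the universally tight classification of min-twist thickened tori to match relative Euler classes on each layer, and glue on the two solid tori $V_1, V_2$ via a standard model.

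The harder case is the boundary value $b/a = -p_i/q_i$, where the preceding theorem only guarantees that $-\frac{p_i-p_{i-1}}{q_i-q_{i-1}}$ and $-\frac{p_{i+1}-p_i}{q_{i+1}-q_i}$ are admissible slopes for $T$. I would split this into two substeps. (a) Show that the complete list of admissible slopes is exactly the delta slopes $D_1, \ldots, D_{l-k+2}$ sitting on the Farey arc through $B=-p_i/q_i$. This is a tessellation argument: any other slope $D$ would need a connecting arc $BD$ inside the Tessellation disc, but it would be forced to cross one of the connecting arcs $\overset{\frown}{CD_1}, \overset{\frown}{D_iD_{i+1}}, \overset{\frown}{AD_{l-k+2}}$, contradicting the fact that Farey connecting arcs are disjoint in the interior. (b) Show that two consecutive admissible slopes $D_i, D_{i+1}$ give Legendrian isotopic knots. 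Since these slopes are connected by a single Farey edge, the thickened torus between Heegaard tori of these slopes is a basic slice; take a convex annulus $\Sigma$ between $K \subset T_{D_i}$ and $K'\subset T_{D_{i+1}}$, use $\tw(K,\Fr T)=\tw(K',\Fr{T'})=-1$ to compute that $\Gamma_\Sigma$ consists of exactly two arcs joining $K$ to $K'$ with no boundary-parallel components, and then let the linearized characteristic foliation provide the explicit Legendrian isotopy.

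Chaining (b) along the path $D_1\to D_2\to\cdots\to D_{l-k+2}$, every Legendrian $(a,b)$ knot at max $\tb$ is Legendrian isotopic to one lying on a torus of slope $D_1 \in [-p'/q',-1]$, which puts us back in the characteristic thickened torus and lets us finish by the same layer-by-layer glue as in the generic case. The main obstacle I anticipate is substep (a): one has to read off from the continued fraction expansion of $\tilde{\mathcal{A}}(-p/q)$ exactly which slopes share a Farey edge with $-p_i/q_i$, taking care of the strings of $-2$'s in the expansion where several consecutive slopes collapse to the same vertex; the disjointness-of-arcs argument then rules out everything else. The corollaries (the mountain range has a single peak; the rotation number is computed by formula \ref{rot} with the data of section \ref{1stcomputation}) follow immediately because every max-$\tb$ representative is, after Legendrian isotopy, a ruling on a torus with slope in $[-p'/q',-1]$.
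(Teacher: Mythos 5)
Your proposal follows essentially the same route as the paper's own proof: the generic case is handled by the forced delta slope lying in $[-\frac{p'}{q'},-1]$ and the layer-by-layer gluing, while the boundary case $\frac{b}{a}=-\frac{p_i}{q_i}$ is split into the same two substeps --- enumerating the admissible slopes $D_1,\dots,D_{l-k+2}$ via disjointness of connecting arcs in the Tessellation disc, and then passing between consecutive slopes through a basic slice using a convex annulus whose dividing set is two arcs and whose linearized characteristic foliation supplies the Legendrian isotopy. This matches the paper's argument in both structure and all the key steps.
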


\begin{example} Let $(p,q)=(8,5)$ and $(a,b)=(-3,7)$. From the last example, we know that the shortest hop sequence from $-\infty$ to $-\frac{p}{q}=-\frac{8}{5}$ is $-\infty<-2<-\frac{5}{3}<-\frac{8}{5}$. Its delta slope sequence is $-1>-\frac{3}{2}$. Notice that $\frac{b}{a}=-\frac{7}{3}\in(-\infty,-2)$, thus $\overline{\tw}(K_{(-3,7)},\Fr T)=-|\det\begin{pmatrix}
-3&-1\\
7&1
\end{pmatrix}|=-4$. Taking this value back to formula of $\tb$ number, we have $\overline{\tb}(K_{(-3,7)})=-4+(-3)7+\frac{7^2 5}{8}=\frac{45}{8}$. Assume $K$ is a Legendrian $(-3,7)$ knot with max $\tb$ number. Then, its rotation number is given by formula \ref{rot} where $T$ is the convex Heegaard torus that $K$ lies on. We have $s(T)=-1$, taking the data in section \ref{1stcomputation}, we have $\rot(K)=-\frac{7}{4}$.
\begin{remark}
    The mountain range in this case has only one top. This fact is compatible with Onaran's results in section 4 of \cite{O}.
\end{remark}

\end{example}
\section{Recent progress}
Thanks to Min's work, the contact mapping class group of $(L(p,q), \xi_{ut})$ is characterized by the theorem below. It gives the answer for question 1 of \cite{O}.
\begin{theorem}[theorem 1.1 of \cite{M}]
Let $\pi_0(Cont(L(p,q),\xi_{ut}))$ be the the group of contact isotopy classes of co-orientation preserving contactomorphisms , then 
$$\pi_0(Cont(L(p,q),\xi_{ut}))=\begin{cases}
\mathbb{Z}_2, p\ne-2 \text{ and } q\equiv-1 \text{(mod p)}  \\
\mathbb{Z}_2, q\not\equiv\pm 1 \text{(mod p) and } q^2\equiv 1\text{(mod p) }\\
1, \text{otherwise}.
\end{cases}$$
For the first two cases, the contact mapping class group is generated by the involution $\overline{\sigma}$. Here, the involution $\overline{\sigma}$ is induced from $\sigma:S^3\rightarrow S^3\subset\mathbb{C}^2$ mapping $(z_1,z_2)$ to $(z_2,z_1)$.
\end{theorem}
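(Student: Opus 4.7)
The plan is to reduce the computation to an equivariant contact topology problem on the universal cover. Since $\pi : S^3 \to L(p,q)$ pulls back $\xi_{ut}$ to the standard tight structure $\xi_{st}$, any co-orientation preserving contactomorphism $\phi$ of $(L(p,q), \xi_{ut})$ lifts to a contactomorphism $\widetilde{\phi}$ of $(S^3, \xi_{st})$ that conjugates the deck action $T(z_1, z_2) = (\zeta z_1, \zeta^q z_2)$ (where $\zeta = e^{2\pi i/p}$) to some power $T^k$ of itself. This yields a well-defined homomorphism $\rho : \pi_0(\mathrm{Cont}^+(L(p,q), \xi_{ut})) \to (\mathbb{Z}/p)^\times$ whose image is constrained by the contact condition and whose kernel is controlled by the equivariant contact mapping class group of $(S^3, \xi_{st})$.

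First I would produce the nontrivial class. The involution $\sigma(z_1, z_2) = (z_2, z_1)$ is a contactomorphism of $(S^3, \xi_{st})$, and a direct check shows $\sigma \circ T = T^q \circ \sigma$. Imposing that this same $\sigma$ be $\mathbb{Z}/p$-equivariant forces $q^2 \equiv 1 \pmod p$; in that case $\sigma$ descends to an involution $\overline{\sigma}$ of $L(p,q)$ preserving $\xi_{ut}$, capturing exactly the two admissible regimes of the statement, namely $q \equiv -1 \pmod p$ and $q^2 \equiv 1$ with $q \not\equiv \pm 1 \pmod p$.

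Second I would apply the main theorem \ref{thm1} to verify that $\overline{\sigma}$ is not contact-isotopic to the identity. Pick a Legendrian negative torus knot $K$ of oriented type $K_{(a,b)}$ lying on a Heegaard torus; its image $\overline{\sigma}(K)$ has oriented type $K_{(b,a)}$ because $\sigma$ swaps the two Heegaard solid tori. For generic $(a,b)$ with $a \ne b$ these oriented knot types are distinct in $L(p,q)$, so by Theorem \ref{thm1} the knots $K$ and $\overline{\sigma}(K)$ cannot be coarsely equivalent; in particular $\overline{\sigma}$ is not smoothly isotopic to the identity, and a fortiori not contact-isotopic.

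The hardest step is the upper bound. The image calculation of $\rho$ extends the first step: equivariance of $\widetilde{\phi}$ together with preservation of $\xi_{st}$ forces $k^2 \equiv 1 \pmod p$, pinning $\mathrm{Im}(\rho)$ to $\{1, q\}$ and giving the trivial group when $q^2 \not\equiv 1$ or $q \equiv 1 \pmod p$. The triviality of $\ker \rho$ demands an equivariant strengthening of Eliashberg's theorem $\pi_0(\mathrm{Cont}^+(S^3, \xi_{st})) = 1$: given an equivariant contactomorphism $\widetilde{\phi}$ commuting with $T$, one must build a $\mathbb{Z}/p$-equivariant contact isotopy to the identity. This is where the real obstacle lies, because Eliashberg's isotopy is not canonical. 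I would either appeal to a parameterized contractibility result for the space of tight contact structures on $S^3$ and average the isotopy over the cyclic deck action, or else use Gray stability equivariantly after a small $\mathbb{Z}/p$-invariant perturbation; at the end one must track co-orientations carefully to rule out spurious classes coming from a possible outer automorphism swapping the two universally tight contact structures on $L(p,q)$, and this bookkeeping is ultimately what separates the three cases of the statement.
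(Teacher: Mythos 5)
First, a point of orientation: this paper does not prove the quoted statement at all --- it is imported verbatim as Theorem 1.1 of \cite{M}, so there is no internal proof to measure your argument against. Min's actual argument is moreover not the equivariant-cover argument you propose: he works downstairs in $L(p,q)$ with convex surface theory, normalizing the image of a convex Heegaard torus and of the characteristic thickened torus, and feeding this into the classification of tight contact structures on solid tori and on $T^2\times I$, Eliashberg's connectivity result for contactomorphisms of the tight ball rel boundary, and Bonahon's computation of the smooth mapping class group of lens spaces.

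As for your outline: the soft parts are essentially right. The relation $\sigma T\sigma^{-1}=T^{q}$ holds precisely when $q^2\equiv 1\pmod p$, so $\overline{\sigma}$ descends exactly in the two listed regimes, and its nontriviality in $\pi_0$ is immediate because it acts on $\pi_1(L(p,q))\cong\mathbb{Z}/p$ by multiplication by $q\not\equiv 1\pmod p$; you do not need Theorem \ref{thm1} for this, and invoking it is logically awkward since the only role of Min's theorem in this paper is to upgrade Theorem \ref{thm1} from coarse equivalence to Legendrian isotopy. The genuine gap is the upper bound, i.e.\ the triviality of $\ker\rho$, and neither of your proposed fixes works as stated. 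There is no operation that ``averages an isotopy over the cyclic deck action'': diffeomorphisms and paths of contactomorphisms cannot be averaged, and the non-equivariant contractibility results for $(S^3,\xi_{st})$ do not formally yield equivariant ones (a contractible space with a finite group action need not have contractible, or even nonempty, fixed-point sets of the relevant parameter spaces). ``Equivariant Gray stability'' likewise presupposes a $\mathbb{Z}/p$-invariant path of contact structures, which is precisely what has not been produced. Note that, modulo Bonahon, triviality of $\ker\rho$ is equivalent to Corollary 1.4 of \cite{M} (smoothly isotopic to the identity implies contact isotopic to the identity), which is the entire analytic content of the theorem; your image computation also only asserts, rather than proves, that co-orientation preservation excludes $k=-1,-q$ and any further square roots of $1$ modulo $p$. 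So the proposal correctly isolates the hard step but does not supply it.
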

Min pointed out that:
\begin{corollary}[corollary 1.4 of \cite{M}] For every universally tight Lens space $(L(p,q),\xi_{ut})$, including $(L(0,1),\xi_{st})$, once a co-orientation preserving contactomorphism $f$ is smoothly isotopic to the identity, $f$ is contact isotopic to the identity.
    
\end{corollary}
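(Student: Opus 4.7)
The plan is to use the preceding theorem together with the classical computation of the smooth mapping class group of lens spaces. Observe first that the conclusion is equivalent to the injectivity of the natural forgetful homomorphism
\[
F:\pi_0(\text{Cont}(L(p,q),\xi_{ut})) \longrightarrow \pi_0(\text{Diff}^+(L(p,q))),
\]
since the kernel of $F$ is precisely the set of contact isotopy classes admitting a representative which is smoothly isotopic to the identity. Given injectivity, any $f$ smoothly isotopic to the identity lies in $\ker F$, so it represents the trivial class in $\pi_0(\text{Cont}(L(p,q),\xi_{ut}))$, which is exactly the statement that $f$ is contact isotopic to the identity.

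By the theorem just stated, $\pi_0(\text{Cont}(L(p,q),\xi_{ut}))$ is either trivial, in which case the corollary is immediate, or isomorphic to $\mathbb{Z}_2$ and generated by the involution $\overline{\sigma}$ descended from the swap $\sigma(z_1,z_2)=(z_2,z_1)$ on $S^3$. In the non-trivial case it therefore suffices to verify that $\overline{\sigma}$ is not smoothly isotopic to the identity of $L(p,q)$: if so, $\ker F$ is trivial and $F$ is injective. Note that the descent of $\sigma$ requires $q^2\equiv 1\pmod p$, which is precisely the arithmetic hypothesis appearing in both non-trivial cases of the theorem, so $\overline{\sigma}$ is actually defined in those cases.

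To detect the smooth non-triviality of $\overline{\sigma}$ I would appeal to Bonahon's classification of $\pi_0(\text{Diff}^+(L(p,q)))$ (see also Hodgson--Rubinstein). The involution $\overline{\sigma}$ exchanges the two solid tori $V_1,V_2$ of the genus-one Heegaard splitting of $L(p,q)$, whereas any diffeomorphism smoothly isotopic to the identity must preserve, up to isotopy, the ordered Heegaard splitting. Bonahon's theorem enumerates exactly when such a Heegaard swap is realized by an orientation-preserving diffeomorphism and shows that, when it is, the swap represents a non-trivial element of the smooth mapping class group, under arithmetic hypotheses that encompass both cases $p\neq-2$, $q\equiv-1\pmod p$ and $q\not\equiv\pm1\pmod p$, $q^2\equiv1\pmod p$ in which $\pi_0(\text{Cont})$ is non-trivial. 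The boundary case $L(0,1)=S^1\times S^2$ is handled identically, since the theorem of Min explicitly includes it.

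The main obstacle is the second arithmetic case $q\not\equiv\pm1\pmod p$ with $q^2\equiv1\pmod p$, where the action of $\overline{\sigma}$ on $H_1(L(p,q))=\mathbb{Z}/p$ need not by itself separate $\overline{\sigma}$ from the identity. Here one has to invoke a finer invariant, such as the Reidemeister torsion or the explicit enumeration of ordered Heegaard-splitting isotopy classes carried out in Bonahon's work, to confirm that the Heegaard swap is a non-trivial smooth isotopy class. With this input in hand, injectivity of $F$ is established in all cases and the corollary follows.
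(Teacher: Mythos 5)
The paper does not actually prove this statement: it is imported verbatim as Corollary~1.4 of \cite{M}, so there is no in-paper argument to compare against. Judged on its own terms, your reduction is the right one and is surely how the corollary follows from Theorem~1.1 of \cite{M}: the claim is equivalent to injectivity of the forgetful map $F:\pi_0(Cont(L(p,q),\xi_{ut}))\to\pi_0(\mathrm{Diff}(L(p,q)))$, the source is trivial or $\mathbb{Z}_2$ generated by $\overline{\sigma}$, and in the latter case one only needs that $\overline{\sigma}$ is not smoothly isotopic to the identity. Two corrections, though. First, your assessment of where the $H_1$-argument fails is backwards, and in fact no heavy machinery (Bonahon's classification, Reidemeister torsion) is needed at all: $\overline{\sigma}$ exchanges the cores of the two Heegaard solid tori, hence acts on $H_1(L(p,q))\cong\mathbb{Z}/p$ by multiplication by $q$ (equivalently $q^{-1}$, since $q^2\equiv1$). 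In the case $q\not\equiv\pm1\pmod p$ this is visibly not the identity automorphism, so $H_1$ alone separates $\overline{\sigma}$ from the identity there --- that is precisely the case you flagged as problematic. In the case $q\equiv-1\pmod p$ it acts by $-1$, which is nontrivial on $\mathbb{Z}/p$ exactly when $p>2$, and $p=2$ is excluded by the hypothesis of the theorem (the ``$p\ne-2$'' there should be read as $p\ne2$). Since a diffeomorphism smoothly isotopic to the identity acts trivially on $H_1$, injectivity of $F$ follows in all cases from this elementary computation. Second, your appeal to ``any diffeomorphism isotopic to the identity preserves the ordered Heegaard splitting up to isotopy'' is itself a nontrivial input (uniqueness of genus-one splittings); the $H_1$ route avoids it. Finally, $L(0,1)=S^1\times S^2$ falls into the ``otherwise'' case of the theorem, where $\pi_0(Cont)$ is trivial and the corollary is immediate, so no separate treatment is required.
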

As a corollary of corollary 1.4 of \cite{M}, We have an enhanced version of Main theorem:
\begin{theorem}[Enhanced Main Theorem]\label{Enhanced main thm} 
For every universally tight Lens space $(L(p,q),\xi_{ut})$, including $(L(0,1),\xi_{st})$, two Legendrian torus knots are Legendrian isotopic if and only if their classical invariants are agree.
\end{theorem}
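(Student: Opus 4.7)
The plan is to derive this enhancement from the coarse-equivalence classifications already in hand together with Min's improved description of the contact mapping class group. The forward direction is immediate, since the oriented knot type, rational Thurston--Bennequin invariant, and rational rotation number are all preserved by Legendrian isotopy. So I only need to argue the converse.

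For the converse, suppose $K$ and $K'$ are two Legendrian torus knots with matching classical invariants. The first step is to produce a contactomorphism smoothly isotopic to the identity that carries $K$ to $K'$. If $p\geq 1$ and the underlying knot type is positive I invoke Theorem 4.4 of \cite{O}; if it is negative I invoke the Main Theorem \ref{thm1} of this paper. For $p=0$, i.e.\ on $(S^1\times S^2,\xi_{st})$, the classification up to Legendrian isotopy has already been carried out by Chen, Ding, and Li in \cite{CDL}, so there is nothing to prove. In the remaining cases I obtain a contactomorphism $f$ smoothly isotopic to the identity with $f(K)=K'$, and such an $f$ is automatically co-orientation preserving.

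The second step is to promote the single contactomorphism $f$ to an isotopy, so that applying it to $K$ traces out a smooth path of Legendrian knots. This is exactly what Corollary 1.4 of \cite{M} supplies: any co-orientation preserving contactomorphism of $(L(p,q),\xi_{ut})$ which is smoothly isotopic to the identity is in fact contact isotopic to the identity. Hence there is a path $\{f_t\}_{t\in[0,1]}$ in $Cont(L(p,q),\xi_{ut})$ with $f_0=\mathrm{id}$ and $f_1=f$, and then $\{f_t(K)\}_{t\in[0,1]}$ is a Legendrian isotopy from $K$ to $K'$.

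There is no substantive geometric obstacle, since all the real work has already been packaged into the coarse-equivalence classifications and into Min's corollary; the argument is essentially just chaining these together. The only small item to verify along the way is that the contactomorphisms produced in the proofs of Theorem \ref{thm1} and of Onaran's theorem are co-orientation preserving, which is automatic because they are built by gluing contactomorphisms of universally tight basic slices and of standard neighborhoods of Legendrian circles, each of which preserves co-orientation by construction.
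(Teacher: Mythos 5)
Your proposal is correct and follows essentially the same route as the paper: the paper likewise obtains the Enhanced Main Theorem by combining the coarse-equivalence classifications (Theorem \ref{thm1} together with Onaran's result for positive torus knots, and \cite{CDL} for $L(0,1)$) with Corollary 1.4 of \cite{M} to upgrade a contactomorphism smoothly isotopic to the identity to a contact isotopy, hence a Legendrian isotopy. Your extra remark on co-orientation preservation is a sensible detail the paper leaves implicit.
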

In the theorem above, the Legendrian torus knots in $(L(0,1),\xi_{st})$ is classified in \cite{CDL}. Unlike $(L(p,q),\xi_{ut})$, classical invariants for Legendrian torus knots in $(L(0,1),\xi_{st})$ are oriented knot type, twisting number, and rotation number. Here, the twisting number is $\tw(K,\Fr T)$ for Legendrian torus knot $K$ and Heegaard torus $T$; the rotation number is the rotation number of the tangent vector of $K$ measured by a trivialization of $\xi_{st}$.   
\section{Acknowledgment}
\begin{itemize}
    \item Thank my advisor Fan Ding. Under his guidance, I finished reading Honda's paper\cite{H} which really plays a very important role in my work. Also, without his suggestion, I would not even have a try on researching on torus knots in lens space. I sincerely wish him in good health. 
    \item Thank John. B. Etnyre and Hyunki Min for their precious advices. They read throughout of my handwritten papers. I am really appreciated that they would like to spare their time to have a look on my work. This really matters a lot for a young reseacher.
    \item Thank XiaoLong Hu for his long term financial assistant.
\end{itemize}

\end{document}